\def\R{\mathbb{R}}
\def\N{\mathbb{N}}
\def\Q{\mathbb{Q}}
\def\Z{\mathbb{Z}}
\def\bm{\boldsymbol}
\def\sg {{\rm sg}}
\def\mult {{\rm mult}}
\def\Dom {{\rm Dom}}
\newtheorem{definition}{Definition}
\newtheorem{lemma}[definition]{Lemma}
\newtheorem{proposition}[definition]{Proposition}
\newtheorem{theorem}[definition]{Theorem}
\newtheorem{corollary}[definition]{Corollary}
\newtheorem{remark}[definition]{Remark}
\newtheorem{notation}[definition]{Notation}
\newtheorem{example}[definition]{Example}
\newenvironment{proof}[1]{
  \trivlist \item[\hskip \labelsep{\it #1}]}{\hfill\mbox{$\square$}
  \endtrivlist}
\newcommand{\TaQ}{{\rm{TaQ}}}
\newcommand{\pder}{{\rm pder}}
\newcommand{\pdeg}{{\rm pdeg}}
\title{ Decision problem for a class of univariate Pfaffian functions}
\author{Mar\'\i a Laura Barbagallo$^{\natural, \flat,}$\footnote{Partially supported by the following grants: PIP 11220130100527CO (CONICET) and UBACYT 20020160100039BA (2017-2019).}, Gabriela Jeronimo$^{{ \natural,\flat,\diamondsuit},*}$, Juan Sabia$^{{\flat, \diamondsuit}, * ,}$\footnote{Corresponding author. e-mail: jsabia@dm.uba.ar}
\\[3mm]
{\small ${\natural}$ Departamento de Matem\'atica, FCEN, Universidad de Buenos Aires, Argentina}\\
{\small $\flat$ Departamento de Ciencias Exactas, CBC, Universidad de Buenos Aires, Argentina}\\
{\small ${\diamondsuit}$ IMAS, CONICET--UBA, Argentina}\\
}
\begin{document}

\maketitle

\begin{abstract}
We address the decision problem for sentences involving univariate functions constructed from a fixed Pfaffian function of order $1$. We present a new symbolic procedure solving this problem with a computable complexity based on the computation of suitable Sturm sequences. For a general Pfaffian function, we assume the existence of an oracle to determine the sign that a function of the class takes at a real algebraic number. For E-polynomials, we give an effective algorithm solving the problem without using oracles and apply it to solve a similar decision problem in the multivariate setting. Finally, we introduce a notion of Thom encoding for zeros of an E-polynomial and describe an algorithm for their computation.
\end{abstract}

Keywords: Pfaffian functions; Sturm sequences; decision problem; complexity.

\section{Introduction}

Pfaffian functions are analytic functions which are solutions of triangular systems of first order partial differential equations with polynomial coefficients. This class of functions, introduced by Khovaskii in \cite{Kho80}, includes polynomials, exponentials, logarithms and trigonometric functions in bounded intervals, among others, and satisfies global finiteness properties similar to polynomials.  For example, a system of $n$ equations given by Pfaffian functions in $n$ variables defined in a domain in $\R^n$ has finitely many non-degenerate solutions (see \cite{Kho91}). This behavior allows to prove effective and algorithmic results such as bounds on the complexity of basic operations with Pfaffian functions and sets defined by them (see, for example,  \cite{GV04}).

For the case of polynomials over $\R$, in \cite{Tarski51}, Tarski proved a quantifier elimination method for the first order theory of the real numbers.  He also posed the same question for this theory extended with exponentials, which are a particular subclass of Pfaffian functions. In \cite{vdD84} this question was answered negatively and, later, in \cite{MW96} the decidability of this extended theory was proved, provided Schanuel's conjecture is true,  using a model-theoretic approach not suitable for implementation. Afterwards, the decision problem and some related questions were considered for fragments of the first order theory of the reals extended with a particular Pfaffian function from an algorithmic viewpoint (see, for example, \cite{Richardson91}, \cite{Vor92}, \cite{Maignan98}, \cite{AW00}, \cite{Weis00} and \cite{AMW08}).

Recently, in \cite{MW12}, a decision procedure for a certain class of first order sentences involving integral polynomials and a certain specific analytic transcendental function was given and, in \cite{XLY15}, a quantifier elimination method for formulas involving an exponential function in a particular variable was proposed. Both these methods rely on the isolation of real zeros of univariate transcendental functions by means of a bisection based algorithm; their theoretical complexity has not been analyzed and, moreover, obtaining complexity estimates seems to be a difficult task.

In this paper, we address the decision problem for first order sentences constructed from atomic formulas of the type $f(x) >0$, $f(x)=0$ and $f(x)<0$, where $f(x) = F(x,\varphi(x))$ with $F\in \Z[X,Y]$, for a fixed univariate Pfaffian function $\varphi$ of order $1$ (see Section \ref{subsec:functionclass} for the definition of this class of functions).  We present a symbolic algorithm solving this problem with computable complexity. The procedure relies on the computation of suitable Sturm sequences we introduce in this framework, generalizing the ones used in \cite{BJS16} for zero counting. For general Pfaffian functions of order $1$, in order to determine their sign at a real  algebraic number, we assume the existence of an oracle as it is usual in the literature. Our main result is the following:

\begin{theorem}\label{thm:Pfaffiandecision}
Let $\varphi$ be a Pfaffian function satisfying  $\varphi'(x) = \Phi(x, \varphi (x))$ for $\Phi \in \Z[X,Y]$ with $\deg (\Phi )\le \delta$.
Let $\Psi$ be a quantifier-free formula  in a variable  $x$ involving  functions  $g_1, \dots, g_s$ defined as $g_i(x) = G_i(x, \varphi(x))$ for $G_i \in \Z[X,Y]$ with   $\deg G_i \le d$, for $1\le i \le s$. There is a symbolic procedure that determines the truth value of the formula $\exists x \, \Psi$  in an interval $[\alpha, \beta] \subseteq \hbox{Dom}(\varphi)$, for $\alpha<\beta$ real algebraic numbers,  within complexity $O(s\delta(sd+\delta)^{13}  \log^3(sd+\delta) + \delta (sd +\delta)^4 |\Psi| )$, where $|\Psi|$ denotes the length of the formula $\Psi$.
\end{theorem}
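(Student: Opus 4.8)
{Sketch of the proof.}
The plan is to reduce the truth value of $\exists x\,\Psi$ on $[\alpha,\beta]$ to a finite Boolean check over the list of sign conditions realized by $(g_1,\dots,g_s)$ on $[\alpha,\beta]$, and to produce that list by isolating the real zeros of the $g_i$ through Sturm sequences adapted to the Pfaffian class. We may assume each $g_i\not\equiv 0$, which is immediate when $\varphi$ is transcendental over $\Q(x)$ (then $g_i\equiv 0\iff G_i=0$) and otherwise reduces everything to the classical rational case.

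The starting point is the closure property: if $h(x)=H(x,\varphi(x))$ with $H\in\Z[X,Y]$, then $h'(x)=(DH)(x,\varphi(x))$, where $D=\partial_X+\Phi\,\partial_Y$ is a $\Z$-derivation of $\Z[X,Y]$, so the class is closed under differentiation with the controlled growth $\deg(DH)\le\deg H+\delta-1$. Generalizing the construction of \cite{BJS16}, I would attach to any $h=H(x,\varphi(x))$ in the class a chain $h=h_0,h_1,\dots,h_k$ of functions in the class, with $h_1=h'$ and the subsequent terms produced by a recursion on the defining polynomials that uses the derivation $D$, designed so that: (i) the chain is finite, with $k$ polynomially bounded in $\deg H$ and $\delta$ --- Khovanskii's finiteness for order-$1$ Pfaffian functions is what forces termination despite $D$ not lowering degree --- and (ii) it is a Sturm sequence in the usual sense, so that for real algebraic $\alpha'<\beta'$ in $[\alpha,\beta]$ the number of sign variations of $(h_0,\dots,h_k)$ at $\alpha'$ minus that at $\beta'$ equals the number of distinct real zeros of $h$ in $(\alpha',\beta']$. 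The signs of the $h_j$ at the algebraic endpoints $\alpha',\beta'$ --- each $h_j$ being explicitly of the form $(\cdot)(x,\varphi(x))$ --- are supplied by the oracle, so these counts (Tarski queries) are computable. Applying this to $h=g_i$ for each $i$, and, when several functions must be treated simultaneously, to suitable products $\bigl(\prod_i G_i^{\varepsilon_i}\bigr)(x,\varphi(x))$ of degree $\le sd$ so that the auxiliary polynomials have degree $O(sd+\delta)$, and then bisecting, I would obtain isolating intervals with rational endpoints for every real zero of every $g_i$ in $[\alpha,\beta]$, together with the record of which $g_i$ vanishes at each; by Khovanskii's bound the number of these zeros is polynomial in $s,d,\delta$.

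Sorting the isolating intervals together with $\alpha$ and $\beta$ splits $[\alpha,\beta]$ into finitely many points $w_0=\alpha<w_1<\dots<w_r=\beta$ and open intervals $(w_{j-1},w_j)$, on each of which the sign vector $(\sg g_1,\dots,\sg g_s)$ is constant with no zero entry. On an open cell I would evaluate that vector at a rational point chosen strictly between the two bounding isolating intervals, reading each $\sg g_i$ from the oracle; at a point $w_j$ the $i$-th entry is $0$ exactly when $w_j$ is one of the isolated zeros of $g_i$, and otherwise equals, by continuity, the sign of $g_i$ on either adjacent open cell. This would yield the finite set $\Sigma\subseteq\{-1,0,1\}^s$ of sign conditions realized on $[\alpha,\beta]$.

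Since the truth value of $\Psi$ at a point depends only on the sign vector of $(g_1,\dots,g_s)$ there, $\exists x\in[\alpha,\beta]\,\Psi(x)$ holds if and only if some $\sigma\in\Sigma$ satisfies the Boolean formula obtained from $\Psi$ by substituting $\sigma_i$ for the sign of $g_i$, and each such check costs $O(|\Psi|)$. The stated bound would then follow by adding the cost of constructing the Sturm chains and computing the variation counts --- dominated by remainder-type sequence computations with bivariate integer polynomials of degree $O(sd+\delta)$ and polynomially growing heights, together with the bisection depth, which accounts for the $O(s\,\delta(sd+\delta)^{13}\log^3(sd+\delta))$ term --- and the $O(|\Sigma|)$ oracle-aided sign and Boolean evaluations, bounded by $O(\delta(sd+\delta)^4|\Psi|)$ once $|\Sigma|$ is bounded via the Khovanskii count of cells. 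I expect the main obstacle to be the construction and analysis of the Sturm chains: making them simultaneously finite, correct, and computable over $\Z$ with controlled growth requires genuinely exploiting the order-$1$ structure, since the derivation $D$ does not decrease degree.
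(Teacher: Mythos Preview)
Your overall architecture --- compute the set of realizable sign vectors of $(g_1,\dots,g_s)$ on $[\alpha,\beta]$ and then evaluate $\Psi$ on each --- matches the paper, and you correctly identify that products of the $G_i$ of degree $O(sd)$ must be handled. But the way you propose to obtain those sign vectors has a real gap, and it is precisely the gap the paper is written to avoid.

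You plan to \emph{isolate} the zeros of the $g_i$ by bisection driven by Sturm counts, then read off sign vectors at sample points in the resulting cells. For that to yield the stated complexity you must bound the bisection depth, i.e.\ you need an effective lower bound on the distance between distinct real zeros of order-$1$ Pfaffian functions with parameters $(s,d,\delta)$. No such bound is available in general (and the paper's introduction singles out exactly this obstruction in the bisection-based methods of \cite{MW12,XLY15}). Your sentence ``the bisection depth, which accounts for the $O(s\delta(sd+\delta)^{13}\log^3(sd+\delta))$ term'' is therefore unjustified: nothing in Khovanskii's finiteness or in the degree control of $D=\partial_X+\Phi\,\partial_Y$ gives root separation, only root \emph{counts}. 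Relatedly, your Sturm chain is only described at the level of intention; the actual chain in the paper is not built by iterating $D$ but by taking \emph{subresultants} of $\widetilde F G$ and $F$ as polynomials in $Y$ over $\Z[X]$. This makes the chain automatically finite of length $\le \deg_Y F$, with explicitly bounded coefficient polynomials $\tau_i,\rho_i\in\Z[X]$; the only interval subdivision needed is at the (real algebraic) roots of these $\tau_i,\rho_i$, not at zeros of Pfaffian functions.

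The paper then never isolates the Pfaffian zeros at all. It computes Tarski-queries $\TaQ(f,g;\alpha,\beta)$ directly from sign-variation counts of the subresultant chain at the algebraic points $\alpha,\beta,\alpha_1,\dots,\alpha_k$ (this is where the oracle is used, a bounded number of times at predetermined algebraic numbers), and feeds these Tarski-queries into the linear-algebra sign-determination scheme of \cite{BPR,Perrucci11} to list the realizable sign conditions of $g_1,\dots,g_s$ over the zero sets of each $g_j$ and of $(\prod_j g_j)'$. That bypasses root separation entirely and is what produces the polynomial complexity bound. If you want your write-up to go through, replace the bisection/isolation step with this Tarski-query route.
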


For the particular case of univariate E-polynomials, that is, when $\varphi(x) = e^{h(x)}$ for an integral polynomial $h$, from the procedure underlying the above theorem, we obtain an oracle-free symbolic algorithm that solves the decision problem  by using a subroutine to determine the sign of an E-polynomial at a real algebraic number presented in \cite{BJS16}.

Finally, we apply the algorithm developed for univariate E-polynomials in two different situations. In the setting of \emph{multivariate} E-polynomials, we construct a symbolic algorithm with bounded complexity for the decision problem for prenex formulas with one block of existential quantifiers.
On the other hand, we introduce a notion of Thom encoding for real zeros of univariate E-polynomials that generalizes the one used for real algebraic numbers and allows us to deal symbolically with the different zeros of an E-polynomial. We give an algorithm to compute these encodings and estimate the complexity.

The paper is organized as follows: Section \ref{sec:preliminaries} introduces the notation we use throughout the paper, the basic complexity results on which our algorithms rely, and the class of univariate Pfaffian functions of order $1$ we will consider. In Section \ref{sec:Sturm}, we construct suitable Sturm sequences associated to Pfaffian functions of this class and prove their main properties, from the theoretical and algorithmic point of view. Section \ref{sec:decision} deals with the decision problem for formulas constructed from univariate Pfaffian functions of order $1$. Finally, Section \ref{sec:Epolynomials} is devoted to E-polynomials: first, we prove our main complexity results for the decision problem and then, we focus on Thom encodings in this setting.

\section{Preliminaries}\label{sec:preliminaries}

\subsection{Basic notation and results}

Throughout this paper, we will deal with polynomials, mainly univariate or bivariate, with integer coefficients. For a polynomial
$F\in \Z[X,Y]$, we write $\deg_X(F)$ and $\deg_Y(F)$ for the degrees of $F$ in the variables $X$ and $Y$ respectively, $\deg(F)$ for its total degree, and $H(F)$ for its height, that is, the maximum of the absolute values of the coefficients of $F$.

For  $P_1, P_2\in \Z[X]$, we have that
$H(P_1P_2) \le (\min\{\deg(P_1), \deg(P_2)\} +1) H(P_1) H(P_2)$.
We can also estimate the height of the product of bivariate integer polynomials as follows: if $P_1,\dots, P_m \in \Z[X.Y]$, then
\begin{equation} \label{eq:productheight}
H(\prod_{i=1}^{m} P_i) \le \prod_{i=1}^{m} H(P_i) (\deg(P_i) +1)^2
\end{equation}
(see, for instance, \cite[Remark 8.24]{BPR}).

For $\gamma= (\gamma_0,\dots, \gamma_N)\in \R^{N+1}$ with $\gamma_i \ne 0$ for every $0\le i \le N$, the \emph{number of variations in sign} of $\gamma$ is the cardinality of the set $\{1\le i\le N \mid \gamma_{i-1} \gamma_i <0\}$. For a tuple $\gamma$ of arbitrary real numbers, the number of variations in sign of $\gamma$ is the number of variations in sign of the tuple which is obtained from $\gamma$ by removing its zero coordinates. Given $c\in \mathbb{R}$ and a sequence of univariate real functions $\mathbf{f} =(f_0,\dots, f_N)$ defined at $c$,  we write $v(\mathbf{f},c)$ for the number of variations in sign of the $(N+1)-$tuple $(f_0(c),\ldots,f_N(c))$.

\subsection{Complexities} \label{sec:complexity}

The main objects our algorithms deal with are polynomials with rational coefficients and bounded degree represented by the array of all their coefficients in a pre-fixed order of monomials. The notion of complexity of an algorithm we adopt is the number of operations and comparisons between elements in $\Q$.

In our complexity bounds we will use the following complexity estimates (see \cite{vzGG}):
\begin{itemize}\itemsep=0pt
\item For a matrix in $\Q^{n\times n}$, its determinant can be obtained within complexity $O(n^\omega)$, where $\omega<2.376$ (see \cite[Chapter 12]{vzGG}).
\item The product of two polynomials in $\Q[X]$ of degrees bounded by $d$ can be done within complexity $O(M(d))$, where $M(d) := d \log(d) \log\log(d)$.
\item Interpolation of a degree $d$ polynomial in $\Q[X]$ can be achieved within $O(M(d)\log(d))$ arithmetic operations.
\end{itemize}

A basic subroutine in our algorithms will be the computation of subresultants.  We will compute them by  means of matrix determinants according to \cite[Notation 8.55]{BPR}, which enables us to control both the complexity and the
output size (an alternative method for the computation of subresultants, based on the Euclidean algorithm, can be found in \cite[Algorithm 8.21]{BPR}).

Let $P$ and $Q \in \Z[X][Y]$ be polynomials such that $\deg_X(P) = D_X$, $\deg_Y(P) = D_Y$, $\deg_X(Q) = d_X$ and $\deg_Y(Q) = d_Y$, with $D_Y > d_Y$. For $0\le j \le d_Y$, let $ \textrm{SRes}_j(P,Q)$ be the $j$th subresultant of $P$ and $Q$ (considered as polynomials in the variable $Y$), which is a polynomial of degree bounded by $j\le d_Y$ in the variable $Y$, whose coefficients are polynomials of degree bounded by $D := d_X D_Y + d_Y D_X$ in the variable $X$.

We compute all the subresultants of $P$ and $Q$ by means of interpolation: for $D+1$ interpolation points, we evaluate the coefficients of $P$ and $Q$, we compute the corresponding determinant polynomials  (which are  polynomials in $Y$ with constant coefficients) and, finally, we interpolate to obtain each coefficient of each $\textrm{SRes}_j(P,Q)\in \Z[X][Y]$.

For each interpolation point, the evaluation of the coefficients of $P$ and $Q$ can be performed within complexity $O(D_X D_Y + d_X d_Y)$.
Then, for each $0\le j\le d_Y$, in order to obtain the specialization of $\textrm{SRes}_j(P,Q)$ at a fixed interpolation point, by expanding the polynomial determinant by its last column, we compute at most $D_Y+d_Y$ determinants of matrices of size bounded by $D_Y +d_Y$,
 multiply them by the polynomials $Y^kP$ or $Y^kQ$ evaluated at the point, and add the results, within complexity $O(((D_Y+d_Y)^\omega +D_Y)d_Y +((D_Y+d_Y)^\omega +d_Y)D_Y) = O((D_Y+d_Y)^{\omega +1})$.
 Thus, the complexity to obtain all subresultants specialized at all the interpolation points is $O(D d_Y(D_Y+d_Y)^{\omega +1})$.
Finally, for every $0\le j\le d_Y$, we recover the coefficients of the subresultant polynomial $\textrm{SRes}_j(P,Q)$  by interpolation from the previous results. Since each polynomial interpolation can be done within complexity $O(M(D) \log(D))$, the computation of the at most $d_Y$ coefficients of each subresultant can be achieved within complexity $O(d_Y M(D) \log(D))$.

Therefore, the complexity to obtain all the polynomial coefficients of all subresultants is of order
$O( D(D_X D_Y + d_X d_Y) + D d_Y (D_Y+d_Y)^{\omega +1} + d_Y^2  M(D) \log(D))$.

\bigskip

We will also apply an effective procedure for the determination of all realizable sign conditions for a finite family of real univariate polynomials. A \emph{realizable sign condition for $q_1,\dots, q_s\in \R[X]$ on a finite set $Z\subset \R$} is an $s$-tuple $(\sigma_1,\dots, \sigma_s) \in \{<,=,>\}^s$ such that $ \{ x\in Z\mid q_1(x) \sigma_1 0,\dots, q_s(x) \sigma_s 0\}\ne \emptyset$. Given $p,q_1,\dots,q_s\in \R[X]$, $p\not \equiv 0$, with $\deg p, \deg q_i \le d $ for all $1\le i \le d$, the list of all realizable sign conditions for $q_1,\dots, q_s$ on the set of real roots of $p$ can be computed within $O(smd \log(m) \log^2(d)) = O(sd^2\log^3(d))$ arithmetic operations, where $m$ is the number of real roots of $p$ (see \cite[Corollary 2]{Perrucci11}).

\bigskip

 To work with real algebraic numbers in a symbolic way, we will use their \emph{Thom encodings}. For a real root $\alpha$ of a given polynomial $p\in \R[x]$, the Thom encoding of $\alpha$ as a root of $p$ is the sequence $(\sg(p'(\alpha)),\dots, \sg(p^{(\deg(p))}(\alpha))$, where $\sg$ stands for the sign of a real number, which is represented by an element of the set $\{0,1,-1\}$. If $\deg(p) = d$, given the Thom encodings of $m$ real roots $\alpha_1,\dots, \alpha_m$ of $p$, we can order them as $\alpha_{i_1}<\dots<\alpha_{i_m}$ (see \cite[Proposition 2.37]{BPR}) within complexity $O(dm\log m)$.

\subsection{A class of univariate Pfaffian functions}\label{subsec:functionclass}

We will deal with the particular class of Pfaffian functions of order $1$ we introduce in this section (for a general definition of Pfaffian functions, see \cite{GV04}).

Let $\varphi$ be a function satisfying a differential equation of the type
\begin{equation}\label{eq:BasicPfaffianFunction}
 \varphi'(x) = \Phi(x, \varphi(x))
\end{equation}
where $\Phi\in \Z[X,Y]$ and $\deg_Y(\Phi)>0$. Given a polynomial $F\in \Z[X,Y]$ with $\deg_Y(F) >0$, we define a function
\begin{equation}\label{eq:PfaffianFunction}
 f(x) = F(x, \varphi(x)).
\end{equation}
We will work with the subclass of all functions defined in this way from a fixed function $\varphi$ and we will call any function in this class \emph{a Pfaffian function associated to $\varphi$}. 

Note that, if $f$ is a Pfaffian function associated to $\varphi$, its successive derivatives are also Pfaffian functions associated to $\varphi$, since
\[ f'(x) = \frac{\partial F}{\partial X}(x, \varphi(x)) + \frac{\partial F}{\partial Y}(x, \varphi(x)). \Phi(x, \varphi(x)) = \widetilde F(x,\varphi(x)),\]
where $\widetilde F\in \Z[X,Y]$ is defined as:
\begin{equation}\label{eq:Ftilde}
\widetilde F(X,Y) = \dfrac{\partial F}{\partial X}(X,Y) + \dfrac{\partial F}{\partial Y}(X,Y)  \Phi(X,Y).
\end{equation}
For a positive integer $\nu$, we will write $\widetilde F^{(\nu)}$ for the polynomial in $\Z[X,Y]$ defining the $\nu$th-order derivative of $f$, that is, $f^{(\nu)}(x) = \widetilde F^{(\nu)}(x, \varphi(x))$.

\bigskip
A particular subclass of these functions we will consider is the one of \emph{E-polynomials}, namely, when $\varphi(x) = e^{h(x)}$ for $h\in \Z[X]$. This function  satisfies Equation \eqref{eq:BasicPfaffianFunction} for $\Phi(X,Y) =  h'(X) Y$.

\section{Sturm sequences and Tarski-queries}\label{sec:Sturm}

\subsection{Definition and main properties}

We introduce a notion of Sturm sequence associated to a pair of continuous functions that extends the definition given in \cite{Heindel71} and works in a similar way as it does for a pair of real univariate polynomials.

\begin{definition}  \label{SucSturmgral}
Let $f,g:(a,b)\rightarrow\mathbb{R}$ be continuous functions. A \emph{Sturm sequence in $(a,b)$ for $f$ with respect to $g$} is a finite sequence of continuous functions $\mathbf{f}= (f_0, f_1, \ldots, f_N)$ defined in $(a,b)$ satisfying the following properties for every $y\in (a,b)$:
\begin{enumerate}
	\item $f_0(y)=0$ if and only if $f(y)= 0$ and $g(y)\neq0$.
	\item If $f_0(y)=0$, then there exists $ \epsilon >0$ such that $f_1(x)\neq 0$ for all $x\in (y-\epsilon,y+\epsilon)$. Moreover,

if $g(y)>0$, then $$\begin{cases} f_0(x)f_1(x)<0 & \hbox{ for } \ y-\epsilon<x<y \\ f_0(x)f_1(x)>0 & \hbox{ for } \ y<x<y+\epsilon,\end{cases}$$
	
and, if $g(y)<0$, then $$\begin{cases} f_0(x)f_1(x)>0 & \hbox{ for }  \ y-\epsilon<x<y \\ f_0(x)f_1(x)<0 & \hbox{ for }  \ y<x<y+\epsilon.\end{cases}$$
	\item For $i=1,\ldots,N-1$, if $f_i(y)=0$, then  $f_{i-1}(y)f_{i+1}(y)<0$.
	\item $f_N(y)\neq 0$.
\end{enumerate}
\end{definition}

If, for a given $x\in \mathbb{R}$,  $v(\mathbf{f},x)$ denotes the number of variations in sign of the $(N+1)-$uple $\mathbf{f}(x):=(f_0(x),\ldots,f_N(x))$, we have (c.f.~\cite[Theorem 2.1]{Heindel71}):

\begin{theorem}\label{resultadosturm}
Let $f, g:(a,b) \to \R$ be continuous functions and $\mathbf{f} = (f_0,\ldots, f_N)$ a Sturm sequence for $f$ with respect to $g$ in $(a,b)$. If $c,d\in (a,b)$, $c<d$, are not zeros of $f$, then $$v(\mathbf{f}, c)-v(\mathbf{f},d)=\#\left\{x\in (c,d) \mid f(x)=0 \wedge g(x)>0\right\}-\#\left\{x\in (c,d) \mid f(x)=0 \wedge g(x)<0\right\}.$$
\end{theorem}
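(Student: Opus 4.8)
The plan is to reduce the global statement on $(c,d)$ to a purely local analysis around each zero of $f_0$ in $(c,d)$, exactly as in the classical proof for polynomial Sturm sequences. First I would observe that by property 4, $f_N$ never vanishes, and by properties 1--4 the function $v(\mathbf{f},\cdot)$ is locally constant on the complement of the finite set $S$ of points where some $f_i$ vanishes; here I must first argue that $S$ is finite on $[c,d]$. Continuity alone does not give this, so the intended reading is that on the compact interval $[c,d]$ the functions in the sequence have only finitely many zeros — which will be true in the application since the $f_i$ are Pfaffian functions associated to $\varphi$ and hence have finitely many zeros on any bounded interval; I would state this as a consequence of the Sturm sequence axioms being applied in a context where the functions are analytic with finitely many zeros, or simply add the finiteness of the zero sets as a standing hypothesis inherited from the setting. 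Granting finiteness, $v(\mathbf{f},\cdot)$ is constant on each open subinterval of $[c,d]\setminus S$, so $v(\mathbf{f},c)-v(\mathbf{f},d) = \sum_{y\in S\cap(c,d)} \big(v(\mathbf{f},y^-)-v(\mathbf{f},y^+)\big)$, where $v(\mathbf{f},y^\mp)$ denote the (well-defined, by local constancy) one-sided limiting values.

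Next I would compute the local jump $v(\mathbf{f},y^-)-v(\mathbf{f},y^+)$ at a single $y\in S$. The key point is that zeros of the \emph{intermediate} functions $f_1,\dots,f_{N-1}$ contribute nothing: if $f_i(y)=0$ with $1\le i\le N-1$, then by property 3, $f_{i-1}(y)f_{i+1}(y)<0$, so $f_{i-1}$ and $f_{i+1}$ are nonzero and of opposite sign near $y$, which means the block $(f_{i-1},f_i,f_{i+1})$ contributes exactly one sign variation on both sides of $y$ regardless of the sign of $f_i$ — so no net change (one must also handle the case of several consecutive vanishing $f_i$'s, but property 3 forbids two consecutive ones from vanishing simultaneously, so adjacent zero-indices can be treated independently, or by a short induction on the length of a maximal run). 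Therefore the only contributions come from $y$ with $f_0(y)=0$, equivalently (property 1) $f(y)=0$ and $g(y)\ne 0$. At such a $y$, properties 1 and 2 tell us: $f_1$ is nonzero on a punctured neighborhood, and the sign of the product $f_0f_1$ flips across $y$ in a manner dictated by $\sg(g(y))$. If $g(y)>0$, then $f_0f_1<0$ just left of $y$ and $f_0f_1>0$ just right, so the pair $(f_0,f_1)$ loses exactly one sign variation: $v$ drops by $1$. If $g(y)<0$, the pair gains one sign variation: $v$ increases by $1$. (One should note $f_0$ need not be sign-constant on either side — it may itself vanish only at $y$ by property 1 combined with the surrounding axioms, or change sign — but what matters is solely the sign of the \emph{product} $f_0f_1$, which property 2 pins down; a careful accounting shows the contribution of the $(f_0,f_1)$ pair to $v$ changes by exactly $\mp 1$ according to $\pm\sg(g(y))$, and the pair $(f_1,f_2)$ contributes no net change because $f_1\ne 0$ near $y$ so that sub-pair behaves like an intermediate block.)

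Summing these local contributions over all $y\in S\cap(c,d)$ then gives
$$v(\mathbf{f},c)-v(\mathbf{f},d) = \#\{y\in(c,d)\mid f(y)=0,\ g(y)>0\} - \#\{y\in(c,d)\mid f(y)=0,\ g(y)<0\},$$
using that the hypothesis $c,d$ are not zeros of $f$ guarantees $v(\mathbf{f},c)=v(\mathbf{f},c^+)$ and $v(\mathbf{f},d)=v(\mathbf{f},d^-)$ (i.e.\ the endpoints themselves are ``regular'' for the $f_0$-count; if some intermediate $f_i$ vanished at $c$ or $d$ this would not affect $v$ by the intermediate-block argument, so it is harmless, but assuming $f(c),f(d)\ne 0$ keeps the statement clean). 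The main obstacle is the bookkeeping in the local computation at a zero of $f_0$: one must be careful that the \emph{only} pair whose variation count changes is $(f_0,f_1)$ and that it changes by exactly one in the direction prescribed by $\sg(g(y))$, handling correctly the subtlety that $f_0$ itself is allowed to change sign across $y$ (so that na\"ively counting variations of $f_0$ alone is misleading) — the resolution is to track variations of \emph{consecutive products} $f_{i-1}f_i$ rather than individual signs, after which property 2 does all the work. A secondary gap to address is the finiteness of the zero set of the $f_i$ on $[c,d]$, which I would either import from the Pfaffian (analytic, finitely-many-zeros) context in which Definition \ref{SucSturmgral} is used, or add explicitly to the hypotheses.
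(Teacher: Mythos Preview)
Your overall strategy---reduce to a local jump computation at each zero of $f_0$, show intermediate zeros of $f_i$ ($1\le i\le N-1$) contribute nothing via condition~3, and show each zero of $f_0$ contributes $\pm1$ according to $\sg(g(y))$ via condition~2---is correct and matches the paper's argument.

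The one place where you diverge is the finiteness issue, which you flag as a gap and propose to resolve by importing analyticity or adding a hypothesis. This is unnecessary: the paper handles it with a compactness argument that works for arbitrary continuous functions satisfying the four axioms. Specifically, condition~2 already forces each zero of $f_0$ to be isolated (since $f_0f_1\ne0$ on a punctured neighborhood), and condition~3 ensures that at any $t$ with $f_i(t)=0$ for some intermediate $i$, the neighbors $f_{i-1},f_{i+1}$ are nonzero of opposite sign near $t$, so the triple contributes one variation regardless of what $f_i$ does---even if $f_i$ oscillates wildly. The paper builds, for every $t\in[c,d]$, an open interval $I_t$ on which $v$ is either constant or has the prescribed jump at $t$, extracts a finite subcover by compactness, and then observes that the only points where $v$ can change are the centers $t_i$ of the subcover that happen to be zeros of $f_0$; since any zero of $f_0$ lies in some $I_{t_i}$ and $f_0$ vanishes on $I_{t_i}$ only possibly at $t_i$, these exhaust the zeros of $f_0$ in $(c,d)$, which are therefore finitely many. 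So you do not need finiteness of the full zero set $S=\bigcup_i\{f_i=0\}$, only of $\{f_0=0\}$, and that comes for free from axiom~2 plus compactness. Incorporating this observation would close the gap you identified and make your proof work at the stated generality.
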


\begin{proof}{Proof.}
Let us analyze the function $v(x) := v(\mathbf{f},x)$.

Let $t\in [c,d]\subset (a,b)$.
Since $f_0,\ldots,f_N$ are continuous, there exists $\epsilon>0$ such that for every $i$ with $f_i(t)\neq 0$, the function $f_i$ does not change sign (and, therefore, it does not vanish) in $(t-\epsilon,t+\epsilon)$. On the other hand, for all $i\neq 0, N$ such that $f_i(t)=0$,  by condition $3$ in Definition \ref{SucSturmgral}, we have that $f_{i-1}(t)f_{i+1}(t)<0$. Then, $f_{i-1}$ and $f_{i+1}$ have constant and opposite signs in $(t-\epsilon,t+\epsilon)$ for all $i\neq 0, N$ with $f_i(t)=0$.
Note that $f_N(t) \ne 0$ for every $t\in (a,b)$ because of condition 4 in Definition \ref{SucSturmgral} and so, $f_N$ does not change sign in $(a,b)$.

If $f_0(t)\neq 0$, 
we conclude that $v(x)=v(y)$ for all $ x,y\in(t-\epsilon,t+\epsilon)$, that is, 
\begin{equation}v(x)=v(t) \ \hbox{ for } x\in (t-\epsilon,t+\epsilon). \label{vcongnula}\end{equation}
	
If $f_0(t)=0$, by condition 2 in Definition \ref{SucSturmgral}, $f_1(t)\neq 0$; moreover, for $\epsilon$ sufficiently small, $f_1$ does not change sign in $(t-\epsilon,t+\epsilon)$ and,
if $a\leq t-\epsilon<x<t<y<t+\epsilon\leq b$,
we have that
\begin{itemize}
\item if $g(t)>0$, then $\sg(f_0(x))=-\sg(f_1(x))\neq 0$ and $\sg(f_0(y))=\sg(f_1(y))\neq 0$,
\item if $g(t)<0$, then $\sg(f_0(x))=\sg(f_1(x))\neq 0$ and $\sg(f_0(y))=-\sg(f_1(y))\neq 0$.
\end{itemize}
Therefore,  if $g(t)>0$, then $v(x)=v(y)+1$, and if $g(t)<0$, then $v(x)=v(y)-1$.
Moreover, since $f_0(t)=0$,
we conclude that
\begin{equation} \begin{array}{l} \hbox{if} \ g(t)>0: \ v(x)=\begin{cases}v(t)+1 &  \hbox{ for } x\in (t-\epsilon,t) \\ v(t) & \hbox{ for } x\in [t,t+\epsilon)\end{cases}\\[5mm]
\hbox{if} \ g(t)<0: \ v(x)=\begin{cases}v(t) & \hbox{ for } x\in (t-\epsilon,t]\\v(t)+1 &  \hbox{ for } x\in (t,t+\epsilon).\end{cases}\end{array}\label{vcongnonula}\end{equation}

In this way, we obtain a covering by open intervals  $(I_t)_{t\in[c,d]}$ centered at $t$ of the closed interval $[c,d]$, such that, for every $t\in [c,d]$, either $f_0(t)\neq 0$ and (\ref{vcongnula}) holds, or $f_0(t)=0$ and (\ref{vcongnonula}) holds.
By compactness, there exists $c\leq t_1<t_2<\ldots<t_k\leq d$ such that $[c,d] \subset \bigcup_{1\le i \le k} I_{t_i}$. We may assume that none of these intervals is contained in another one and, therefore, that $I_{t_i}\cap I_{t_{i+1}}\neq\emptyset$ for all $i=1,\ldots,k-1$.

Let $r_1<\ldots<r_s$ be those values $t_i$, $i=1,\dots, k$, that are zeros of $f_0$. By condition 1 of Definition \ref{SucSturmgral}, $g(r_j)\ne 0$ for every $j=1,\dots, s$. Furthermore,
\begin{itemize}
\item $c<r_1$ and $r_s<d$, since $c$ and $d$ are not zeros of $f$ and so, they are not zeros of $f_0$.
\item If $t\in (c,d)$ and $t\neq r_j$ for all $j=1,\ldots,s$, then $f_0(t)\neq 0$, since $t\in I_{t_i}$ for some $i=1,\ldots,k$, and, because of the construction of  $I_{t_i}$, $f_0(x)\ne 0$ for every $x\in I_{t_i}$, $x\ne t_i$.
\end{itemize}
As a consequence,
$$\left\{x\in(c,d) \mid f(x)=0, \, g(x)>0\right\}=\left\{x\in(c,d) \mid f_0(x)=0, \, g(x)>0\right\} = \left\{r_j \mid g(r_j)>0\right\},$$
$$\left\{x\in(c,d) \mid f(x)=0, \, g(x)<0\right\}=\left\{x\in(c,d) \mid f_0(x)=0, \, g(x)<0\right\} = \left\{r_j \mid g(r_j)<0\right\}.$$
	
On the other hand, we have that $v$ is constant in $[c, r_1)$, in $(r_j,r_{j+1})$ for every $j=1,\ldots,s-1$, and in $(r_s, d]$, since $(I_{t_i})_{i=1,\ldots,k}$ is a covering of $[c,d]$ satisfying (\ref{vcongnula}) and (\ref{vcongnonula}).

For every $j=1,\ldots,s-1$, let $\xi_j\in (r_j,r_{j+1})$, and also take $\xi_0=c$ and $\xi_{s}= d$.
Due to (\ref{vcongnula}) and (\ref{vcongnonula}), we have that for every $j=1,\dots, s$,
$$v(\xi_{j-1})-v(\xi_{j})=\begin{cases} \ 1 & \mbox{ if } g(r_{j})>0 \\  -1 & \mbox{ if } g(r_{j})<0\end{cases}.$$
Therefore,
$v(c)-v(d)=\sum_{j=1}^{s}v(\xi_{j-1})-v(\xi_{j})=\#\left\{r_j \mid g(r_j)>0\right\}-\#\left\{r_j \mid g(r_j)<0\right\}.$
\end{proof}

\subsection{Construction}

In this section we will show how to effectively compute Sturm sequences for the particular class of Pfaffian functions introduced in Section \ref{subsec:functionclass}.

Let $\varphi$ be a function satisfying a differential equation of the type
$ \varphi'(x) = \Phi(x, \varphi(x))$,
where $\Phi\in \Z[X,Y]$ and $\deg_Y(\Phi)>0$.
Given polynomials $F, G \in \Z[X,Y]$ with $\deg_Y(F)>0$ and $\deg_Y(G)>0$, we are going to work with the Pfaffian functions
\[ f(x) = F(x, \varphi(x)) \ \hbox{ and } \ g(x) = G(x, \varphi(x)). \]

According to identity \eqref{eq:Ftilde}, we may associate with $F$ a polynomial $\widetilde F \in \Z[X,Y]$ such that
\[ f'(x) = \widetilde F(x, \varphi(x)).\]

We will apply the theory of subresultants and its relation with polynomial remainder sequences presented in \cite[Section 8.3]{BPR} in order to get Sturm sequences for $f$ with respect to $g$. Similarly as in \cite{BJS16}, we will first consider a sequence of subresultant polynomials $R_i\in \Z[X,Y]$ for $i=-1,0,\dots, N$ and then, for an open interval $I$ satisfying certain assumptions, we will obtain a sequence of Pfaffian functions $\mathbf{f}_I = (f_{I,i})_{0\le i \le N}$ by substituting $Y=\varphi(x)$ in the polynomials $R_i$ and multiplying by a suitably chosen sign $\sigma_{I,i}$.

\begin{notation}\label{not:subres} Consider $F, \widetilde F$ and $G$ as polynomials in $\Z[X][Y]$. Let
\begin{itemize}\itemsep=0pt
\item $n_{-1} := \deg_Y(\widetilde F G)+1, \quad   R_{-1} := \widetilde F G $;
\item $n_0 := \deg_Y(\widetilde F G),\quad
 R_0 := F$;
\item for $i\ge 0$, if $R_i \ne 0$, let
\[n_{i+1}:= \deg_Y(R_i) \hbox{ and } R_{i+1} := {\rm{SRes}}_{n_{i+1} -1}\in \Z[X][Y]\] be the $(n_{i+1}-1)$-th subresultant polynomial associated to $\widetilde F G$ and $F$ (note that, under our assumptions, $\deg_Y(\widetilde F G) > \deg_Y(F)$).
\end{itemize}

Let $N:= \max\{i\ge 0 \mid R_i \ne 0\}$.
For $i=-1,\dots, N$, let $F_i=\dfrac{R_i}{R_N} \in  \Q(X)[Y],$
$\tau_i:=t_{n_i-1}\in \Z[X]$ be the leading coefficient of $R_i$ and, for $i=1,\dots, N+1$, let $\rho_i:=s_{n_i}\in \Z[X]$ be the $n_i$th subresultant coefficient of $\widetilde F G$ and $F$.
\end{notation}

{}From the structure theorem for subresultants (see \cite[Theorem 8.56]{BPR}), it follows that

\begin{equation}\label{eq:subres1} \epsilon \tau_0^\delta R_{-1} = C_0 R_0 + R_1 \end{equation}
where $\delta = \deg_Y(\widetilde F G) - \deg_Y (F)+1$ and $\epsilon:=(-1)^{\delta (\delta-1)/2}$ is a well defined sign,
and for $i\ge 0$,
\begin{equation}\label{eq:subres2}
\rho_{i+2} \tau_{i+1} R_i  = C_{i+1} R_{i+1} - \rho_{i+1} \tau_{i} R_{i+2}.
\end{equation}

\begin{definition}\label{def:signseq}
For an interval $I =(a,b)$  containing no root of the polynomials $\tau_i$ for $i=0,\dots, N$ or $\rho_i$ for $i=1,\dots, N+1$, we define inductively a sequence $(\sigma_{I,i})_{0\le i \le N}\in \{1,-1\}^{N+1}$  as follows:
\begin{itemize}
\item $\sigma_{I,0}=1$,
\item $\sigma_{I,1}=\epsilon \  \sg_I (\tau_0)^\delta$
\item $\sigma_{I,i+2}=  \sg_I(\rho_{i+2} \tau_{i+1} \rho_{i+1} \tau_i) \sigma_{I, i}$,
    \end{itemize}
where, for a continuous function $\theta$  of a single variable with no zeros in $I$, $\textrm{sg}_I(\theta)$ denotes the (constant) sign of $\theta$ in $I$.

If $I$ is contained in the domain of $\varphi$,  we introduce the sequence of Pfaffian functions $\mathbf{f}_I= (f_{I,i})_{0\le i\le N}$ defined by $$f_{I,i}(x) =\sigma_{I,i} F_{i}(x, \varphi(x)).$$
\end{definition}

\begin{proposition} \label{prop:sturmI} Let $\varphi$ be a Pfaffian function satisfying  $\varphi'(x) = \Phi(x, \varphi(x))$, where $\Phi \in \Z[X,Y]$ with $\deg_Y(\Phi)>0$.
Consider the functions $f(x) = F(x, \varphi(x))$ and $g(x) = G(x, \varphi(x))$, where $F, G\in \Z[X,Y]$, $\deg_Y (F)>0$, $\deg_Y(G)>0$.
With the notation and assumptions of Definition \ref{def:signseq}, the sequence of Pfaffian functions $\mathbf{f}_I= (f_{I,i})_{0\le i\le N}$  is a Sturm sequence for $f$ with respect to $g$ in $I=(a,b)$.
\end{proposition}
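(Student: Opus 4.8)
The plan is to verify the four conditions of Definition \ref{SucSturmgral} for the sequence $\mathbf{f}_I = (f_{I,i})_{0\le i\le N}$, exploiting the substitution $Y=\varphi(x)$ in the subresultant identities \eqref{eq:subres1}--\eqref{eq:subres2} together with the fact that on the interval $I$ all the auxiliary polynomials $\tau_i,\rho_i$ have constant nonzero sign. First I would observe that since $R_N = \gcd$-like bottom of the subresultant chain divides all the $R_i$ in $\Q(X)[Y]$, and since $\deg_Y R_N$ is constant on $I$, evaluating at $Y=\varphi(x)$ is legitimate: $f_{I,N}(x) = \sigma_{I,N} F_N(x,\varphi(x))$ where $F_N \equiv 1$ after dividing by $R_N$; hence $f_{I,N}$ has no zeros in $I$, which is condition 4. (One must check $R_N(x,\varphi(x))$ has constant sign and does not vanish; this follows because $\tau_N = t_{n_N-1}$ is, up to the substitution, the leading coefficient and $R_N/R_N=1$, so actually $f_{I,N}=\sigma_{I,N}$ is a constant function — the cleanest way to phrase condition 4.)

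Next, for condition 3, I would substitute $Y=\varphi(x)$ into the three-term recurrence \eqref{eq:subres2}. Dividing \eqref{eq:subres2} by $R_N$ gives $\rho_{i+2}\tau_{i+1} F_i = C_{i+1}F_{i+1} - \rho_{i+1}\tau_i F_{i+2}$ in $\Q(X)[Y]$; evaluating at $(x,\varphi(x))$ and using $F_j(x,\varphi(x)) = \sigma_{I,j}^{-1} f_{I,j}(x) = \sigma_{I,j} f_{I,j}(x)$ (signs are $\pm1$), at a point $y\in I$ with $f_{I,i+1}(y)=0$ the middle term drops out and we get
\[
\rho_{i+2}(y)\tau_{i+1}(y)\,\sigma_{I,i}\, f_{I,i}(y) = -\rho_{i+1}(y)\tau_i(y)\,\sigma_{I,i+2}\, f_{I,i+2}(y).
\]
By the defining relation $\sigma_{I,i+2} = \sg_I(\rho_{i+2}\tau_{i+1}\rho_{i+1}\tau_i)\,\sigma_{I,i}$, substituting and comparing signs yields $f_{I,i}(y) f_{I,i+2}(y) < 0$, which is condition 3 (one must note $f_{I,i}(y)\ne 0$ and $f_{I,i+2}(y)\ne 0$, which follows from the same identity since the right/left sides cannot vanish without the other). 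Condition 1 is immediate from Notation \ref{not:subres}: $f_{I,0}(x) = \sigma_{I,0}F_0(x,\varphi(x)) = F(x,\varphi(x))/R_N(x,\varphi(x))$, and since $R_N(x,\varphi(x))$ is a nonzero constant multiple, $f_{I,0}(y)=0 \iff f(y)=0$; the condition "$g(y)\ne 0$" enters because $R_{-1}=\widetilde F G$ and the subresultant chain for $(\widetilde F G, F)$ only detects common roots of $f$ with $\widetilde F G$ away from roots of $G$ — this needs the remark that dividing by $R_N$ removes the $G$-factor, so I would argue that the roots of $f_{I,0}$ in $I$ are exactly the roots of $f$ that are not roots of $g$ (using that $R_N$ captures $\gcd$ information, hence the $G$ dividing $R_{-1}$).

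The main obstacle — and the part needing genuine care rather than routine manipulation — is condition 2, the local sign behavior of $f_{I,0}f_{I,1}$ near a zero $y$ of $f_{I,0}$, with the sign governed by $\sg(g(y))$. Here I would use \eqref{eq:subres1} divided by $R_N$: $\epsilon\tau_0^\delta F_{-1} = C_0 F_0 + F_1$, evaluated at $(x,\varphi(x))$; at $x=y$ where $f_{I,0}(y)=0$ this gives $f_{I,1}(y) = \sigma_{I,1}F_1(y,\varphi(y)) = \sigma_{I,1}\epsilon\tau_0(y)^\delta F_{-1}(y,\varphi(y))$, and since $F_{-1} = \widetilde F G / R_N$ and $\widetilde F(y,\varphi(y)) = f'(y)$, this is a nonzero multiple of $f'(y)g(y)$; in particular $f_{I,1}(y)\ne 0$ (so $f_{I,1}$ has constant sign near $y$), and by the definition of $\sigma_{I,1}$ one computes $\sg(f_{I,1}(y)) = \sg(f'(y)g(y)) \cdot (\text{sign of } R_N(\cdot,\varphi(\cdot)))$. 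On the other hand $f_{I,0}(x) = \sigma_{I,0}F(x,\varphi(x))/R_N(x,\varphi(x))$ changes sign across $y$ exactly as $f(x)$ does, with local behavior determined by $f'(y) = \widetilde F(y,\varphi(y))$ (assuming $f'(y)\ne 0$; if $f'(y)=0$ one argues the claimed sign pattern degenerates consistently, or invokes that $y$ is an isolated zero and the product sign is read from the appropriate first nonvanishing derivative — this is the delicate case to handle carefully). Combining: for $x$ slightly less than $y$, $\sg(f_{I,0}(x)) = -\sg(f'(y))\cdot\sg(R_N)$ and $\sg(f_{I,1}(x)) = \sg(f'(y)g(y))\cdot\sg(R_N)$, so $\sg(f_{I,0}f_{I,1})(x) = -\sg(g(y))$; and for $x$ slightly greater, the product is $+\sg(g(y))$. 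This matches exactly the two cases in condition 2. I expect the bookkeeping of the various $\pm1$ signs ($\epsilon$, $\sg_I(\tau_0)^\delta$, $\sg R_N(\cdot,\varphi(\cdot))$) and the careful treatment of the possibly-multiple zero of $f$ at $y$ to be where the real work lies; everything else is formal substitution into the subresultant structure theorem.
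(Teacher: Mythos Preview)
Your overall architecture matches the paper's proof: verify conditions 1--4 of Definition~\ref{SucSturmgral} by substituting $Y=\varphi(x)$ into the subresultant recurrences \eqref{eq:subres1}--\eqref{eq:subres2} and using the constant sign of $\tau_i,\rho_i$ on $I$. Conditions 3 and 4 go through essentially as you describe (for 3 you still need the backward recursion down to $f_0,f_1$ to get the contradiction with condition 2, as in the paper, rather than just ``the right/left sides cannot vanish without the other'').

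However, there is a genuine missing ingredient that breaks your arguments for conditions 1 and 2. You assert that ``$R_N(x,\varphi(x))$ is a nonzero constant multiple'', and later that $f_{I,1}(y)$ is ``a nonzero multiple of $f'(y)g(y)$''. Both are false in general: setting $r(x)=R_N(x,\varphi(x))$, the function $r$ \emph{does} vanish at every zero of $f$, and if $y$ is a multiple zero of $f$ then $f'(y)=0$ as well. The key fact the paper establishes first is the multiplicity identity
\[
\mult(y,r)=\min\{\mult(y,f'g),\mult(y,f)\},
\]
obtained from $f'g=f_{-1}r$, $f=f_0 r$, and a B\'ezout relation $r=A\cdot f'g+B\cdot f$ coming from the subresultant description of $R_N$. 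This identity is what makes condition 1 work: $f_0(y)=0$ means $\mult(y,f)>\mult(y,r)$, which by the identity forces $\mult(y,f'g)<\mult(y,f)$; hence $f(y)=0$, and $g(y)=0$ is impossible since it would give $\mult(y,f'g)\ge\mult(y,f)$. The converse direction is similar.

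For condition 2 the same identity shows that when $f_0(y)=0$ one has $\mult(y,f)=m\ge 1$, $g(y)\ne 0$, and $\mult(y,r)=m-1$; writing $f(x)=(x-y)^m\alpha(x)$ and $r(x)=(x-y)^{m-1}\beta(x)$ with $\alpha(y),\beta(y)\ne 0$, the paper computes directly
\[
f_0(x)f_{-1}(x)=\frac{f(x)\,f'(x)\,g(x)}{r(x)^2}
=\frac{(x-y)\,\alpha(x)\bigl(m\alpha(x)+(x-y)\alpha'(x)\bigr)g(x)}{\beta(x)^2},
\]
which has a \emph{simple} zero at $y$ with sign pattern governed exactly by $\sg(g(y))$, uniformly in $m$. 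Your argument via $\sg(f'(y))$ only covers $m=1$; the ``delicate case'' you flag is not a cosmetic detail but requires precisely this multiplicity analysis (or an equivalent). Once you have it, the transfer from $f_{-1}$ to $f_1$ through \eqref{eq:subres1} and the definition of $\sigma_{I,1}$ goes as you indicate.
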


\begin{proof}{Proof.}  In order to shorten notation, we simply write $\sigma_{i}= \sigma_{I,i}$ and $f_i = f_{I, i}$ for $i=0,\dots, N$,  $f_{-1}(x) =  F_{-1}(x,\varphi(x))$, and $r(x) = R_N(x, \varphi(x))$.
First, let us prove that, for every $y\in I$,
\begin{equation}\label{eq:multr}
\mult(y, r) = \min\{ \mult(y, f'g), \mult(y,f)\}.
\end{equation}
By the definition of the polynomials $(R_i)_{-1\le i \le N}$, we have that
\begin{equation} \label{eq:fpg} f'(x) g(x)= R_{-1}(x,\varphi(x)) = f_{-1} (x) R_N(x, \varphi(x)) = f_{-1}(x) r(x),\end{equation}
\begin{equation} \label{eq:f} f(x) = R_{0}(x,\varphi(x)) = f_0(x) R_N(x, \varphi(x)) = f_0(x) r(x), \end{equation}
Then, for every $y\in I$, $\mult(y, r) \le \min\{\mult(y, f'g), \mult(y, f)\}$.
On the other hand, taking into account that there exist polynomials $A, B \in \Z[X, Y]$ such that $R_N = A \widetilde{F} G + B F$, it follows that
\begin{equation}\label{eq:RNcl}
r(x) = A(x, \varphi(x)) f'(x) g(x) + B(x, \varphi(x)) f(x),
\end{equation}
which implies that $\mult(y, r) \ge \min \{\mult(y, f'g), \mult(y, f)\}$.

Let us verify that all the conditions in Definition \ref{SucSturmgral} hold.
\begin{enumerate}
\item
Identity \eqref{eq:f} implies that, if $f_0(y) =0$, then $f(y) =0$ and $\mult(y, r) < \mult(y, f)$. If, in addition, $g(y)=0$, then  $\mult(y, f'g) \ge \mult (y, f)$ and so, by identity \eqref{eq:multr}, $\mult (y, r) = \mult (y, f)$, and we arrive at a contradiction. We conclude that $g(y) \ne 0$.

Conversely, if $f(y) =0$ and $g(y)\ne 0$, then $\mult(y, f'g)  =  \mult (y,f)-1$. Then, by identity \eqref{eq:multr}, $\mult(y, r) = \mult (y,f)-1$  and, from identity \eqref{eq:f}, it follows that $\mult (y, f_0)\ge 1$.

\item Let $y\in I$ be a zero of $f_0$. Then $\mult(y, f) = m>0$ and $g(y) \ne 0$, and so, $\mult(y,r) = m-1$.  If $f(x) = (x-y)^m \alpha(x)$, for an analytic function $\alpha$ defined in a neighborhood of $y$,  then $f'(x) = (x-y)^{m-1} (m \alpha(x) + (x-y) \alpha'(x))$ and
\begin{equation}\label{eq:ffminus1} f_0 (x) f_{-1}(x) = \dfrac {\alpha(x) (m \alpha(x) + (x-y) \alpha'(x)) g(x) (x-y) }{\beta^2(x)}
\end{equation}
for an analytic function $\beta$ such that $\beta(y) \ne 0$. Then  $f_{-1}(y)\ne 0$, since $g(y)\ne 0$ and $\alpha(y)\ne 0$ (so  $y$ is a zero of multiplicity $1$ of the analytic function in the left hand side of \eqref{eq:ffminus1}). By equation \eqref{eq:subres1}  and the definition of $\sigma_{I,1}$, we have that $$ (\sg_I(\tau_0))^\delta \ (\tau_0)^\delta f_{-1} = q. f_0 + f_1.$$ Therefore, $f_1(y)\ne 0$, and $f_1$ and $f_{-1}$ have the same sign in a neighborhood of $y$.

Moreover, from identity \eqref{eq:ffminus1}, we deduce that, if $g(y)>0$ then $\sg(f_0(x)f_{-1}(x)) = \sg (x-y)$ for $x$ in a neighborhood of $y$ and, if $g(y)<0$ then $\sg(f_0(x)f_{-1}(x)) = -\sg (x-y)$ for $x$ in a neighborhood of $y$.

\item Note that identity \eqref{eq:subres2} and the definition of $\mathbf{f}_I$ imply that, for $i=1,\dots, N-1$,
$$\rho_{i+1}(x) \tau_i(x) \sigma_{i-1} f_{i-1} (x) = c_i(x) \sigma_{i} f_i(x) - \rho_i(x) \tau_{i-1}(x) \sigma_{i+1} f_{i+1}(x).$$
Fix $i_0$ with $1\le i_0\le N-1$ and assume $f_{i_0}(y) = 0$.
By considering the equalities above for $i_0,i_0-1,\dots, 1$, and taking into account that the polynomials $\rho_j, \tau_j$ do not vanish in $I$, we deduce recursively that, if $f_{i_0+1}(y) = 0$ (or $f_{i_0-1}(y) = 0$), then $f_j(y) = 0$ for every $j=i_0-1,\dots,1, 0$. In particular, $f_0(y) = f_1(y)=0$, contradicting condition 2 in Definition \ref{SucSturmgral}. Therefore, $f_{i_0+1}(y)  \ne 0$ and $f_{i_0-1}(y) \ne 0$.

The fact that  $f_{i_0-1}(y) . f_{i_0+1}(y) >0$ is a direct consequence of the way in which the signs $\sigma_{i}$ for $0\le i \le N$ are defined.

\item There is nothing to prove since, by definition, $f_N$ is a non-zero constant function in $I$.
\end{enumerate}
\end{proof}

We are now going to apply the previous construction for computing Tarski queries:  given Pfaffian functions $f$ and $g$ associated to $\varphi$, the \emph{Tarski-query of $f$ for $g$ in
   $[\alpha, \beta] \subset \mbox{Dom}(\varphi)$} is  the number
$$ \TaQ(f,g; \alpha, \beta):= \#\left\{x\in (\alpha, \beta) / \ f(x)=0 \wedge g(x)>0\right\}-\#\left\{x\in (\alpha, \beta) / \ f(x)=0 \wedge g(x)<0\right\}. $$

First, we introduce some further notation.

\begin{definition} Let $\theta: [a,b]\to \mathbb{R}$ be a non-zero analytic function. For $c\in (a,b)$, we denote $\sg(\theta, c^+)$  the sign that $\theta$ takes in $(c, c+\varepsilon)$  and $\sg(\theta, c^-)$ the sign that $\theta$ takes in $(c-\varepsilon, c)$ for a sufficiently small $\varepsilon>0$. Similarly, $\sg(\theta, a^+)$ and $\sg(\theta, b^-)$ denote the signs that $\theta$ take in $(a, a+\varepsilon)$ and $(b-\varepsilon, b)$ for a sufficiently small $\varepsilon>0$.
For a sequence of non-zero analytic functions $\bm{\theta}= (\theta_0,\dots, \theta_N)$ defined in $J$, we write $v(\bm{\theta}, c^+)$ for the number of variations in sign  in $(\sg(\theta_0, c^+), \dots, \sg(\theta_N, c^+))$ and $v(\bm{\theta}, c^-)$ for the number of variations in sign in $(\sg(\theta_0, c^-), \dots, \sg(\theta_N, c^-))$.
\end{definition}

With the notation and assumptions of Definition \ref{def:signseq}, let $\mathbf{p}_I= ( p_{I,i})_{0\le i \le N}$, where
\begin{equation} \label{eq:sturmI}
p_{I,i} (x) = \sigma_{I,i} R_i(x,\varphi(x))  \quad \hbox { for } i=0,\dots,  N.
\end{equation}
Note that if $\xi \in I=(a,b)$ and $R_N(\xi, \varphi(\xi)) \ne 0$, then $v(\mathbf{p}_I, \xi)=v(\mathbf{f}_I, \xi)$. As a consequence, $v(\mathbf{p}_I, a^+)=v(\mathbf{f}_I, a^+)$ and $v(\mathbf{p}_I, b^-)=v(\mathbf{f}_I, b^-)$.

Then, by Theorem \ref{resultadosturm}, we have:

\begin{proposition}
With the previous assumptions and notation, if, in addition, the closed interval $[a,b]$ is contained in the domain of $\varphi$,
then $$v(\mathbf{p}_I, a^+)-v(\mathbf{p}_I, b^-)=\#\left\{x\in (a,b) \mid f(x)=0 \wedge g(x)>0\right\}-\#\left\{x\in (a,b) \mid f(x)=0 \wedge g(x)<0\right\}.$$
\end{proposition}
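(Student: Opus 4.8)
The plan is to reduce this statement directly to Theorem \ref{resultadosturm} (the Sturm-sequence counting theorem) applied to the sequence $\mathbf{f}_I$, using Proposition \ref{prop:sturmI} to know that $\mathbf{f}_I$ is a genuine Sturm sequence for $f$ with respect to $g$ on $I=(a,b)$. The subtlety is that Theorem \ref{resultadosturm} counts $v(\mathbf{f},c)-v(\mathbf{f},d)$ for interior points $c,d\in(a,b)$ that are not zeros of $f$, whereas the present statement is phrased in terms of the one-sided limiting quantities $v(\mathbf{p}_I,a^+)$ and $v(\mathbf{p}_I,b^-)$ at the endpoints. So the first step is to pass from the endpoint signs to nearby interior points, and the second step is to pass from $\mathbf{p}_I$ to $\mathbf{f}_I$.

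First I would observe, as already noted in the remark preceding the statement, that $v(\mathbf{p}_I,\xi)=v(\mathbf{f}_I,\xi)$ whenever $\xi\in(a,b)$ and $R_N(\xi,\varphi(\xi))\neq 0$: indeed $p_{I,i}(x)=\sigma_{I,i}R_i(x,\varphi(x))=\sigma_{I,i}F_i(x,\varphi(x))\,r(x)=f_{I,i}(x)\,r(x)$ by the definition of $F_i=R_i/R_N$, where $r(x)=R_N(x,\varphi(x))$, so multiplying the whole tuple by the nonzero scalar $r(\xi)$ changes neither the pattern of zero coordinates nor the number of sign variations. Since $r$ is a nonzero analytic function (it is not identically zero on $I$ because $R_N\not\equiv 0$ and $\varphi$ is analytic, so $R_N(x,\varphi(x))$ has isolated zeros), there is $\varepsilon_0>0$ such that $r$ has no zero in $(a,a+\varepsilon_0)$ or in $(b-\varepsilon_0,b)$; moreover, shrinking $\varepsilon_0$ if necessary, each $p_{I,i}$ and each $f_{I,i}$ has constant sign on those one-sided neighborhoods (they are analytic and not identically zero). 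Hence $v(\mathbf{p}_I,a^+)=v(\mathbf{p}_I,\xi)=v(\mathbf{f}_I,\xi)$ for any $\xi\in(a,a+\varepsilon_0)$, and likewise $v(\mathbf{p}_I,b^-)=v(\mathbf{f}_I,\eta)$ for any $\eta\in(b-\varepsilon_0,b)$.

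Next I would arrange that these auxiliary points $\xi,\eta$ are not zeros of $f$: since $f(x)=F(x,\varphi(x))$ is analytic and, by the standing hypotheses (it has at least one factor in $Y$ and $\varphi$ is non-polynomial-ish), not identically zero on $I$, it has only isolated zeros, so after further shrinking $\varepsilon_0$ we may pick $\xi\in(a,a+\varepsilon_0)$ and $\eta\in(b-\varepsilon_0,b)$ with $f(\xi)\neq 0$, $f(\eta)\neq 0$ and $\xi<\eta$ (assuming, as one may, that $b-a$ exceeds $2\varepsilon_0$; if not, just use one common small $\varepsilon_0$ and take $a<\xi<\eta<b$ inside a single short subinterval containing no zero of $f$ other than possibly at the endpoints — the point is only that the counting interval $(\xi,\eta)$ captures all zeros of $f$ in $(a,b)$, which it does since $f$ has no zeros in $(a,\xi]\cup[\eta,b)$ by the choice of $\varepsilon_0$). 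Then Theorem \ref{resultadosturm}, applied with $c=\xi$, $d=\eta$ to the Sturm sequence $\mathbf{f}_I$ provided by Proposition \ref{prop:sturmI}, gives
\[
v(\mathbf{f}_I,\xi)-v(\mathbf{f}_I,\eta)=\#\{x\in(\xi,\eta)\mid f(x)=0\wedge g(x)>0\}-\#\{x\in(\xi,\eta)\mid f(x)=0\wedge g(x)<0\}.
\]
Finally I would combine the two reductions: the left-hand side equals $v(\mathbf{p}_I,a^+)-v(\mathbf{p}_I,b^-)$ by the previous paragraph, and the right-hand side equals the corresponding counts over $(a,b)$ because $f$ has no zeros in $(a,\xi]$ nor in $[\eta,b)$, so the zero sets over $(\xi,\eta)$ and over $(a,b)$ coincide. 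This yields exactly the claimed identity.

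The main obstacle — really the only place where care is needed — is the bookkeeping around the endpoints: one must be sure that $r=R_N(\cdot,\varphi(\cdot))$ and $f$ are not identically zero near $a$ and $b$ so that the one-sided signs $v(\mathbf{p}_I,a^+),v(\mathbf{p}_I,b^-)$ are well defined and that a valid choice of interior auxiliary points exists, and that the short "collar" intervals $(a,\xi)$ and $(\eta,b)$ genuinely contain no zeros of $f$ so that restricting the count from $(a,b)$ to $(\xi,\eta)$ loses nothing. All of this follows from analyticity (isolated zeros) together with $R_N\not\equiv 0$ and $\deg_Y F>0$; no new idea beyond Theorem \ref{resultadosturm} and Proposition \ref{prop:sturmI} is required.
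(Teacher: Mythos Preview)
Your proposal is correct and follows essentially the same approach as the paper: the paper's argument is simply the observation (stated just before the proposition) that $v(\mathbf{p}_I,\xi)=v(\mathbf{f}_I,\xi)$ whenever $R_N(\xi,\varphi(\xi))\neq 0$, hence $v(\mathbf{p}_I,a^+)=v(\mathbf{f}_I,a^+)$ and $v(\mathbf{p}_I,b^-)=v(\mathbf{f}_I,b^-)$, after which the result follows from Theorem~\ref{resultadosturm} applied to the Sturm sequence $\mathbf{f}_I$ of Proposition~\ref{prop:sturmI}. Your version just spells out more carefully the passage from the one-sided endpoint signs to nearby interior points $\xi,\eta$ avoiding zeros of $f$ and of $r$, which is exactly the content the paper leaves implicit.
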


As a consequence, we obtain:

\begin{theorem}\label{thm:TarskiQuery}
Let $\varphi$ be a Pfaffian function satisfying $\varphi'(x) = \Phi(x, \varphi(x))$ for a polynomial $\Phi \in \Z[X,Y]$ with $\deg_Y(\Phi)>0$, and
let $f(x) = F(x, \varphi(x))$ and $g(x) = G(x, \varphi(x))$, where  $F, G\in \Z[X,Y]$, $\deg_Y (F)>0$, $\deg_Y (G)>0$.
Consider a bounded open interval $ (\alpha, \beta) \subset \mathbb{R}$ such that $[\alpha, \beta]$ is contained in the domain of $\varphi$.

Let $\rho_i$ and $\tau_i$ be the polynomials in $\Z[X]$ introduced in Notation \ref{not:subres}. If $\alpha_1<\alpha_2<\dots< \alpha_k$ are all the roots in $(\alpha, \beta)$  of  $\rho_i$ and $\tau_i$, then
$$ \TaQ(f,g; \alpha, \beta):= \#\left\{x\in (\alpha,\beta) \mid f(x)=0 \wedge g(x)>0\right\}-\#\left\{x\in (\alpha,\beta) \mid f(x)=0 \wedge g(x)<0\right\} =$$ $$=
 \# \{1\le j\le k \mid f(\alpha_j) = 0 \wedge g(\alpha_j)>0\} - \# \{1\le j\le k\mid f(\alpha_j) = 0 \wedge g(\alpha_j)<0\}  + {}$$ $${} + \sum_{j=0}^k v(\mathbf{p}_{I_j}, \alpha_j^+) - v(\mathbf{p}_{I_j}, \alpha_{j+1}^-),$$
where $\alpha_0 = \alpha$, $\alpha_{k+1} = \beta$ and
 for every $0\le j\le k$, $I_j = (\alpha_j, \alpha_{j+1})$ and $\mathbf{p}_{I_j}$ is the sequence of functions
  $p_{I_j,i} (x) = \sigma_{I_j,i} R_i(x,\varphi(x))$  for $i=0,\dots,  N$ with $\sigma_{I_j,i}$ introduced in Definition \ref{def:signseq} and $R_i(X,Y)$ defined in Notation \ref{not:subres}.
\end{theorem}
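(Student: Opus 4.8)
The plan is to reduce the computation of the Tarski query over the full interval $(\alpha,\beta)$ to a sum of local contributions: the contribution of the finitely many "bad" points $\alpha_1<\dots<\alpha_k$ (the roots of the $\rho_i$ and $\tau_i$ in $(\alpha,\beta)$), plus the contribution of each open subinterval $I_j=(\alpha_j,\alpha_{j+1})$ on which the sign sequences of Definition \ref{def:signseq} are well defined and Proposition \ref{prop:sturmI} applies. The first thing I would do is observe that the set of zeros of $f$ in $(\alpha,\beta)$ at which $g$ is nonzero splits according to whether the zero is one of the $\alpha_j$ or lies in one of the open intervals $I_j$; hence
\[
\TaQ(f,g;\alpha,\beta) = \Big(\#\{j : f(\alpha_j)=0\wedge g(\alpha_j)>0\} - \#\{j : f(\alpha_j)=0\wedge g(\alpha_j)<0\}\Big) + \sum_{j=0}^{k} \TaQ(f,g;\alpha_j,\alpha_{j+1}),
\]
where I set $\alpha_0=\alpha$, $\alpha_{k+1}=\beta$. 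This is just additivity of counting over a partition of $(\alpha,\beta)$ into the points $\{\alpha_1,\dots,\alpha_k\}$ and the intervals $I_0,\dots,I_k$.

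Next I would handle each term $\TaQ(f,g;\alpha_j,\alpha_{j+1})$ separately. By the choice of the $\alpha_j$, the open interval $I_j=(\alpha_j,\alpha_{j+1})$ contains no root of any $\tau_i$ or $\rho_i$, so Definition \ref{def:signseq} produces the sign sequence $(\sigma_{I_j,i})_{0\le i\le N}$ and, since $I_j\subseteq[\alpha,\beta]\subseteq\Dom(\varphi)$, the sequence of Pfaffian functions $\mathbf{f}_{I_j}$. By Proposition \ref{prop:sturmI}, $\mathbf{f}_{I_j}$ is a Sturm sequence for $f$ with respect to $g$ in $I_j$. I would then like to apply Theorem \ref{resultadosturm} on $I_j$; the subtlety is that Theorem \ref{resultadosturm} requires evaluation at interior points $c,d\in I_j$ that are not zeros of $f$, whereas I want the contribution of the whole open interval $(\alpha_j,\alpha_{j+1})$, i.e. "at the endpoints." Since $f$ is a nonzero analytic function, it has only finitely many zeros in the compact interval $[\alpha_j,\alpha_{j+1}]$, so for $\varepsilon>0$ small enough $\alpha_j+\varepsilon$ and $\alpha_{j+1}-\varepsilon$ are not zeros of $f$ and there are no zeros of $f$ in $(\alpha_j,\alpha_j+\varepsilon]$ or $[\alpha_{j+1}-\varepsilon,\alpha_{j+1})$; applying Theorem \ref{resultadosturm} with $c=\alpha_j+\varepsilon$, $d=\alpha_{j+1}-\varepsilon$ and letting $\varepsilon\to 0$ gives
\[
\TaQ(f,g;\alpha_j,\alpha_{j+1}) = v(\mathbf{f}_{I_j},\alpha_j^+) - v(\mathbf{f}_{I_j},\alpha_{j+1}^-).
\]
(The one-sided limits $v(\mathbf{f}_{I_j},\alpha_j^+)$ and $v(\mathbf{f}_{I_j},\alpha_{j+1}^-)$ are well defined because each $f_{I_j,i}$, being $\sigma_{I_j,i}F_i(x,\varphi(x))$ with $F_i\in\Q(X)[Y]$ and the denominator $R_N$ not vanishing near the endpoints after clearing, is analytic and not identically zero on a one-sided neighborhood, hence has a constant sign there.)

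Finally I would replace $\mathbf{f}_{I_j}$ by $\mathbf{p}_{I_j}$ in these one-sided counts, using the remark made just before the proposition: for $\xi\in I_j$ with $R_N(\xi,\varphi(\xi))\ne 0$ one has $v(\mathbf{p}_{I_j},\xi)=v(\mathbf{f}_{I_j},\xi)$, because $p_{I_j,i}=r\cdot f_{I_j,i}$ with the common factor $r(x)=R_N(x,\varphi(x))$ of constant nonzero sign near the endpoint, and multiplying a whole tuple by a fixed nonzero scalar does not change its number of sign variations; taking one-sided limits gives $v(\mathbf{p}_{I_j},\alpha_j^+)=v(\mathbf{f}_{I_j},\alpha_j^+)$ and similarly at $\alpha_{j+1}^-$. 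Substituting into the displayed decomposition yields exactly the claimed formula. The main obstacle is the careful bookkeeping at the endpoints $\alpha_j$: one must check that $R_N(x,\varphi(x))$ and each $F_i(x,\varphi(x))$ have well-defined constant signs on the appropriate one-sided punctured neighborhoods (so that the $v(\cdot,\alpha_j^+)$, $v(\cdot,\alpha_{j+1}^-)$ make sense and the Sturm-sequence count passes to the limit), which follows from analyticity of $\varphi$ and the fact that none of the relevant polynomials $\rho_i,\tau_i$ — hence, via the structure theorem, no $R_i$ "in an essential way" — degenerates on $I_j$; the rest is the additivity argument, which is routine.
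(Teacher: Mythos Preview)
Your proposal is correct and follows essentially the same approach as the paper. In the paper the theorem is stated as a direct consequence of the preceding proposition (which gives $v(\mathbf{p}_I,a^+)-v(\mathbf{p}_I,b^-)=\TaQ(f,g;a,b)$ on any interval $I=(a,b)$ avoiding the roots of the $\tau_i,\rho_i$), together with the obvious additivity of the Tarski query under the partition of $(\alpha,\beta)$ into the points $\alpha_1,\dots,\alpha_k$ and the subintervals $I_0,\dots,I_k$; you have simply spelled out those two ingredients in more detail, including the limiting argument at the endpoints and the passage from $\mathbf{f}_{I_j}$ to $\mathbf{p}_{I_j}$, which the paper records separately just before the proposition.
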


\subsection{Algorithmic computation}\label{subsec:TQalgorithm}

Let  $\varphi$ be a Pfaffian function satisfying  $\varphi'(x) = \Phi(x, \varphi(x))$ for a polynomial $\Phi \in \Z[X,Y]$. Let $\delta_Y:=\deg_Y(\Phi)>0$ and $\delta_X:= \deg_X(\Phi)$.

Given functions of the type $f(x) = F(x, \varphi(x))$ and $g(x) = G(x, \varphi(x))$ where $F, G\in \Z[X, Y]$ with $\deg (F) \le d$, $\deg (G) \le d$ for $d \in \Z_{\ge 0}$, $\deg_Y(F)>0$ and $\deg_Y(G)> 0$,  in this section, we describe an algorithm for computing the Tarski-query of $f$ for $g$ in $[\alpha, \beta] \subset \mbox{Dom}(\varphi)$ following Theorem \ref{thm:TarskiQuery}. To do so, we need to compute
$\sg(\theta, c^+)$   and $\sg(\theta, c^-)$ for adequate Pfaffian functions $\theta$ and real algebraic numbers $c$.

\begin{remark}\label{rem:lateralsign}
If $\theta: J \to \mathbb{R}$ is a non-zero analytic function defined in an open interval $J\subset \mathbb{R}$ and $c\in J$, then
\[ \sg(\theta, c^+) = \begin{cases} {\rm sign} (\theta(c)) & {\rm if } \ \theta(c) \ne 0\\
{\rm sign} (\theta^{(r)}(c)) &
{\rm if }\  {\rm{mult}}(c, \theta) = r
\end{cases}\]
and
\[ \sg(\theta, c^-) = \begin{cases} {\rm sign} (\theta(c)) & {\rm if } \ \theta(c) \ne 0\\
{\rm sign} ((-1)^{r} \theta^{(r)}(c)) &
{\rm if }\  {\rm{mult}}(c, \theta) = r
\end{cases}\]
where ${\rm{mult}}(c, \theta)$ is the multiplicity of $c$ as a zero of $\theta$.
\end{remark}

To estimate the complexity of the algorithm, we will need an upper bound for the multiplicity of a zero of a Pfaffian function of the considered type. We will apply the bound obtained in \cite{BJS16}:
if $\theta(x) = \Theta(x, \varphi(x))$, with $\Theta\in \Z[X, Y]$, is a nonzero Pfaffian function, for every $c\in \mathbb{R}$ such that $\theta(c)=0$,  we have
\begin{equation}\label{eq:multiplicity}\mult(c, \theta)\le 2 \deg_X(\Theta) \deg_Y(\Theta) + \deg_X(\Theta) (\delta_Y-1) + (\delta_X+1) \deg_Y(\Theta). \end{equation}

Our algorithm works as follows:

\bigskip

\noindent \hrulefill
\smallskip

\noindent\textbf{Algorithm \texttt{Tarski-query}}

\medskip

\noindent INPUT: A function $\varphi$ satisfying a differential equation $\varphi'(x) = \Phi(x, \varphi(x))$,  polynomials $F, G \in \Z[X,Y]$, and a closed interval $[\alpha,\beta]\subset\text{Dom}(\varphi)$.

\smallskip
\noindent OUTPUT: $\TaQ(f, g; \alpha, \beta)$, where $f(x) = F(x, \varphi(x))$ and $g(x) = G(x, \varphi(x))$.
\begin{enumerate}

\item Compute the polynomials $R_i$ and $\tau_i$, for $-1\le i \le N$, and $\rho_i$, for $1\le i \le N+1$,  associated to $F$ and $G$ as in Notation \ref{not:subres}.

\item Determine and order all the real roots $\alpha_1<\alpha_2<\cdots <\alpha_k$ lying in the interval $(\alpha, \beta)$ of the polynomials $\tau_i$, for $-1\le i \le N$, and $\rho_i$, for $1\le i \le N+1$.

\item For every $0\le j\le k$:
\begin{enumerate}
\item Determine the signs $\sigma_{I_j, i}$, for $0\le i \le N$, as stated in Definition \ref{def:signseq}, for $I_j:= (\alpha_j, \alpha_{j+1})$,  where $\alpha_0 = \alpha$ and $\alpha_{k+1} = \beta$.
\item Consider the sequence of functions $\mathbf{p}_{I_j}= (p_{I_j, i})_{0\le i \le N}$, with $p_{I_j, i}:= \sigma_{I_j, i}R_i(x, \varphi(x))$, introduced in \eqref{eq:sturmI}, and compute $v_j:=v(\mathbf{p}_{I_j}, \alpha_j^+) - v(\mathbf{p}_{I_j}, \alpha_{j+1}^{-})$.
\end{enumerate}

\item For every $1\le j\le k$, decide whether $f(\alpha_j) =0$. If this is the case, determine whether $g(\alpha_j) > 0$ or $g(\alpha_j)< 0$. Set
        $v^+:=  \#\{ 1\le j\le k \mid f(\alpha_j) = 0 \wedge g(\alpha_j)>0\}$ and  $v^- := \#\{ 1\le j\le k \mid f(\alpha_j) = 0 \wedge g(\alpha_j)<0\}$.

\item Compute $\TaQ(f, g; \alpha, \beta) := v^+- v^- + \sum_{j=0}^k v_j$.

\end{enumerate}

\noindent \hrulefill

\bigskip

\emph{Complexity analysis:} Let  $\delta_X:= \deg_X(\Phi)$, $\delta_Y:= \deg_Y(\Phi)$.

\begin{description}

\item[Step 1.] In a first step, we compute the degrees $D_Y = \deg_Y (\widetilde F G) \le 2d+\delta_Y -1$ and $d_Y = \deg_Y (F) \le d$. Noticing that $D_X=\deg_X (\widetilde F G) \le 2d+\delta_X$ and $d_X = \deg_X F \le d$, we have that all the subresultant polynomials associated with $\widetilde F G$ and $F$ can be computed, by means of the procedure described in Section \ref{sec:complexity}, within complexity
 $$O(d((d+\delta_X+\delta_Y) (d + \delta_X) (d + \delta_Y) ^{\omega + 1} + d M(d(d+\delta_X + \delta_Y)) \log(d(d+\delta_X + \delta_Y)))).$$
From these subresultants, we obtain the polynomials $R_i\in \Z[X,Y]$, $\tau_i$ and $\rho_i\in \Z[X]$ introduced in Notation \ref{not:subres} with no change in the complexity order.

\item[Steps 2 and 3(a).]
 Consider the polynomial
\begin{equation}\label{poliL}
L(X) = \prod_{-1\le i\le N} \ \tau_i \prod_{1\le i \le N+1} \rho_i.
\end{equation}
In order to determine the Thom encodings of the roots of $L$ in the interval $(\alpha,\beta)$  (Step 2) and the signs of the polynomials $(\tau_{i})_{-1\le i \le N}$ and $(\rho_{i})_{1\le i \le N+1}$  between two consecutive roots of $L$ (Step 3(a)), we compute the realizable sign conditions on the family
\[ \text{Der}(L),X-\alpha, \beta-X,  \text{Der}(\tau_{i})_{-1\le i \le N}, \text{Der}(\rho_{i})_{1\le i \le N+1}\]
(see Section \ref{sec:complexity}).

Each of the factors in \eqref{poliL}  has degree bounded by  $d (4d+\delta_X+\delta_Y-1)$; therefore, the degree of $L$ is bounded by $(2N+3)d (4d+\delta_X+\delta_Y-1) = O(d^2(d+\delta_X+\delta_Y))$ and the complexity of computing the realizable sign conditions  is $O(d^6(d+\delta_X + \delta_Y)^3 \log^3(d^2(d+\delta_X + \delta_Y)))$. The remaining computations of these steps do  not modify this complexity order.

The overall complexity of Steps 1 -- 3(a) is of order $$O(d^2(d+\delta_X + \delta_Y)^3 (d^4 \log^3(d+\delta_X + \delta_Y)+ (d+\delta_Y)^{\omega -1})).$$

\item[Steps 3(b) and 4.] These steps require the determination of the sign of Pfaffian functions of the type  $P(x, \varphi(x))$, with $P\in \Z[X,Y] $, at real algebraic numbers given by their Thom encodings (more precisely, at the real roots  $\alpha_j$ of $L$ lying on $(\alpha, \beta)$ and at the endpoints $\alpha$ and $\beta$ of the given interval). We assume an oracle is given to achieve this task.

    At Step 3(b), we use the oracle for Pfaffian functions defined by polynomials with degrees in $X$ bounded by $d( 4d + \delta_X +\delta_Y-1)$ and degrees in $Y$ bounded by $d$ and their derivatives. Taking into account that the multiplicity of a zero of such a function is at most $4d (d+\delta_X+\delta_Y -1) (2d+\delta_Y -1) + (\delta_X +1) d$ (see \cite[Lemma 15]{BJS16}), it follows that the determination of the signs  $\sg(R_{ i}, \alpha_j^+)$ (and $\sg(R_{ i}, \alpha_{j+1}^-)$) for every $i, j$  requires at most $O(d^4 (d+\delta_X+\delta_Y)^2 (d+\delta_Y) )$ calls to the oracle for functions defined by polynomials with degrees in $X$ bounded by $O(\delta_X d (d +\delta_Y)(d+\delta_X+\delta_Y))$ and degrees in $Y$ bounded by $O(d+(\delta_Y-1)d(d +\delta_Y)(d+\delta_X+\delta_Y))$.

 At Step 4, we need  $ O(d^2 (d+\delta_X+\delta_Y))$ calls to the oracle for the Pfaffian functions defined by the polynomials $F$ and $G$, having degrees bounded by $d$.
\end{description}

\begin{remark}
If $\deg_Y(F) = 0 $ or $\deg_Y(G) =0$, the Tarski-query $\TaQ(f,g; \alpha, \beta)$ can be easily computed. If both degrees are zero, it suffices to apply an algorithm for univariate real polynomials (\cite{BPR}). If $\deg_Y(F)=0$ and $\deg_Y(G)>0$, use the oracle to determine the signs that $g$ takes at the zeros of $f(x)=F(x)$. Finally, if $\deg_Y(G) =0$ and $\deg_Y(F) >0$, we can simply replace $G$ with $(Y^2+1)G$.
\end{remark}

Summarizing we have:

\begin{proposition}\label{prop:complexityTQ}
Let $\varphi$ be a function satisfying a differential equation $\varphi'(x) = \Phi(x, \varphi(x))$ where $\Phi \in \Z[X,Y]$ with $\delta_X:= \deg_X(\Phi)$, $\delta_Y :=\deg_Y(\Phi) >0$,  and let $\alpha < \beta$ be rational numbers such that  $[\alpha,\beta]\subset\text{Dom}(\varphi)$.  Given
$f(x) = F(x, \varphi(x))$ and $g(x) = G(x, \varphi(x))$ where $F, G\in \Z[X, Y]$ with $\deg (F) \le d$, $\deg (G) \le d$, Algorithm \texttt{Tarski-query} computes $\TaQ(f, g; \alpha, \beta)$ within $O(d^2(d+\delta_X + \delta_Y)^3 (d^4 \log^3(d+\delta_X + \delta_Y)+ (d+\delta_Y)^{\omega -1})) $ arithmetic operations and comparisons and using $ O(d^4 (d+\delta_X+\delta_Y)^2 (d+\delta_Y) )
$ calls to an oracle for determining the signs of Pfaffian functions associated to $\varphi$ at real algebraic numbers.
\end{proposition}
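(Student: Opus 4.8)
{Proof.}
The plan is to read correctness off Theorem \ref{thm:TarskiQuery} and to obtain the two complexity bounds by collecting the per-step estimates made in the complexity analysis above and taking their maxima. For correctness I would check that Steps 1--5 of Algorithm \texttt{Tarski-query} compute exactly the quantities occurring in the formula of Theorem \ref{thm:TarskiQuery}: Step 1 produces the subresultant polynomials $R_i\in\Z[X,Y]$ and the univariate polynomials $\tau_i,\rho_i$ of Notation \ref{not:subres}; Step 2 isolates and orders, via their Thom encodings, the roots $\alpha_1<\dots<\alpha_k$ in $(\alpha,\beta)$ of the $\tau_i$ and $\rho_i$; Step 3 computes on each $I_j=(\alpha_j,\alpha_{j+1})$ the signs $\sigma_{I_j,i}$ of Definition \ref{def:signseq}, hence the sequences $\mathbf{p}_{I_j}$ of \eqref{eq:sturmI} and the differences $v_j=v(\mathbf{p}_{I_j},\alpha_j^+)-v(\mathbf{p}_{I_j},\alpha_{j+1}^-)$ --- here Remark \ref{rem:lateralsign} reduces each one-sided sign $\sg(\theta,c^\pm)$ to the sign of $\theta$, or of a suitable derivative of $\theta$, at $c$; and Step 4 computes the correction terms $v^+$ and $v^-$. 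Step 5 then returns $v^+-v^-+\sum_{j=0}^k v_j$, which is precisely the right-hand side in Theorem \ref{thm:TarskiQuery}, so the output equals $\TaQ(f,g;\alpha,\beta)$.

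For the arithmetic complexity I would bound the steps separately. In Step 1, regarding $\widetilde F G$ and $F$ as polynomials in $Y$ over $\Z[X]$, the bounds $\deg_Y(\widetilde F G)\le 2d+\delta_Y-1$, $\deg_X(\widetilde F G)\le 2d+\delta_X$, $\deg_Y F\le d$, $\deg_X F\le d$ give subresultant coefficients of $X$-degree $O(d(d+\delta_X+\delta_Y))$; substituting into the subresultant-computation estimate of Section \ref{sec:complexity} (and bounding $M(n)=O(n\log^2 n)$) yields the Step 1 cost, and extracting the $R_i,\tau_i,\rho_i$ does not change its order. In Steps 2 and 3(a) one forms $L=\prod_{-1\le i\le N}\tau_i\prod_{1\le i\le N+1}\rho_i$; since the subresultant chain has length $N=O(d)$ and each factor has degree $O(d(d+\delta_X+\delta_Y))$, one gets $\deg L=O(d^2(d+\delta_X+\delta_Y))$, and computing all realizable sign conditions for the derivative family $\text{Der}(L),X-\alpha,\beta-X,(\text{Der}(\tau_i))_{-1\le i\le N},(\text{Der}(\rho_i))_{1\le i\le N+1}$ --- which supplies both the Thom encodings of the $\alpha_j$ and the signs of the $\tau_i,\rho_i$ on each $I_j$ --- costs $O((\deg L)^3\log^3(\deg L))=O(d^6(d+\delta_X+\delta_Y)^3\log^3(d^2(d+\delta_X+\delta_Y)))$ by \cite[Corollary 2]{Perrucci11}; the rest of Steps 2--3 is of lower order. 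Summing the Step 1 and Steps 2--3(a) bounds gives the asserted arithmetic term.

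For the number of oracle calls I would use the multiplicity bound \eqref{eq:multiplicity}. By Remark \ref{rem:lateralsign}, each $\sg(R_i,\alpha_j^\pm)$ is obtained by evaluating the Pfaffian function $R_i(x,\varphi(x))$ and, through the differentiation rule \eqref{eq:Ftilde}, its successive derivatives at $\alpha_j$ up to order $\mult(\alpha_j,p_{I_j,i})$, which \eqref{eq:multiplicity} --- applied to the polynomials defining $R_i$, of $X$-degree $O(d(d+\delta_X+\delta_Y))$ and $Y$-degree $O(d)$ --- bounds by $O(d(d+\delta_X+\delta_Y)(d+\delta_Y))$. Since there are $O(d)$ indices $i$ and $O(\deg L)=O(d^2(d+\delta_X+\delta_Y))$ roots $\alpha_j$ (together with the two endpoints), Step 3(b) uses $O(d^4(d+\delta_X+\delta_Y)^2(d+\delta_Y))$ oracle calls, each for a Pfaffian function associated with $\varphi$ whose defining polynomial has $X$- and $Y$-degrees again controlled by the iterated application of \eqref{eq:Ftilde}; the $O(d^2(d+\delta_X+\delta_Y))$ calls of Step 4, for $\sg(f,\alpha_j)$ and $\sg(g,\alpha_j)$, are absorbed into this count. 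This gives the stated oracle bound.

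The main obstacle is the degree bookkeeping underpinning these estimates: one must control how $\deg_X$ and $\deg_Y$ of the polynomials involved grow, first through the subresultant construction (from $d$ up to $O(d(d+\delta_X+\delta_Y))$ in $X$) and then under the iterated differentiations prescribed by \eqref{eq:Ftilde} that Remark \ref{rem:lateralsign} forces, so that both the degrees supplied to the oracle and the count of oracle calls come out as claimed. Once these degree bounds are in place, the rest is a routine comparison of the per-step costs.
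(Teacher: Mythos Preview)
Your proposal is correct and follows essentially the same approach as the paper: the paper's proof is precisely the step-by-step complexity analysis preceding the proposition (Steps 1--4), together with the observation that correctness is immediate from Theorem~\ref{thm:TarskiQuery}, and you have faithfully reproduced that analysis with the same degree bounds, the same polynomial $L$, the same call to \cite{Perrucci11} for realizable sign conditions, and the same use of the multiplicity bound \eqref{eq:multiplicity} to count oracle calls.
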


\section{Decision problem}\label{sec:decision}

This section focuses on the decision problem for formulas involving univariate Pfaffian functions associated to a fixed function $\varphi$ satisfying $\varphi'(x) = \Phi(x, \varphi(x))$ for a polynomial $\Phi \in \Z[X,Y]$ with $\deg_Y(\Phi) >0$.
We will present a symbolic procedure solving this problem  and estimate its complexity, provided an oracle is given for determining signs of functions of this class evaluated at real algebraic numbers.

\bigskip
We first describe a symbolic procedure to determine all the feasible sign conditions on a finite family of Pfaffian functions associated to $\varphi$ in an interval $[\alpha, \beta]\subseteq \mbox{Dom}(\varphi)$, for $\alpha$ and $\beta$ algebraic numbers, that will be the main sub-routine in our decision procedure.

Given Pfaffian functions $f, g_1,\dots, g_s$ associated with $\varphi$, we start showing how to determine the feasible sign conditions of $g_1, \dots, g_s$ over the set $Z = \{x \in  (\alpha, \beta)\mid f(x)=0\}$. To do so, we  follow the algorithm in  \cite{Perrucci11}, which is based in turn on the ones described in \cite[Chapter 2]{BPR}.

The procedure is recursive and in each step, $i=1, \dots, s$, it computes the feasible sign conditions for $g_1, \dots, g_i$ over $Z$ by means of the computation of suitable Tarski-queries and linear equation system solving.
Let $\delta_X =\deg_X(\Phi) $, $\delta_Y = \deg_Y(\Phi) $, and $d$ is an upper bound for the total degrees of the polynomials defining $f, g_1, \dots, g_s$.  By \cite[Corollary 17]{BJS16}, we have that $\# Z \le (d+1) (2d^2-d) ((\delta_Y+3)d +\delta_X)$ and so, there are at most $O(d^3(d \delta_Y+\delta_X))$ feasible sign conditions at each step.

For $i=1$, there are three possible sign conditions $\{x\in Z\mid g_1(x)>0\}$, $\{x\in Z\mid g_1(x)=0\}$, and $\{x\in Z\mid g_1(x)>0\}$. We determine the cardinalities $c^+, c^0$ and $c^{-}$ of these sets taking into account that:
\begin{eqnarray*}
c^0+c^{+}+c^{-} &=& \TaQ(f,1)\\
    c^{+}-c^{-} &=& \TaQ(f,g_1)\\
    c^{+}+c^{-} &=& \TaQ(f,g_1^2)
\end{eqnarray*}
Here and in the next steps, all Tarski-queries are taken with respect to the interval $(\alpha, \beta)$.

For $i>1$, once step $i-1$ is finished, each of the computed feasible sign conditions gives three possible sign conditions for step $i$. So, following \cite{Perrucci11}, to complete step $i$, we have to compute at most $3 \# Z$ Tarski queries of the type
$\TaQ(f, g_1^{\alpha_1} \dots g_{i}^{\alpha_{i}})$ with $\alpha_j \in \{0,1,2\}$ and to solve a linear system of size at most  $(3  \# Z) \times (3  \# Z)$. As the degrees of the polynomials defining the functions  $g_1^{\alpha_1} \dots g_{i}^{\alpha_{i}}$ are bounded by $2di$, by Proposition \ref{prop:complexityTQ}, the complexity of computing any of these Tarski queries is of order
$$O((sd)^2(sd+\delta_X + \delta_Y)^3 ((sd)^4 \log^3(sd+\delta_X + \delta_Y)+ (sd+\delta_Y)^{\omega -1}))$$
and the computation requires
$O((sd)^4 (sd+\delta_X+\delta_Y)^2 (sd+\delta_Y) )$ calls to the oracle for functions defined by polynomials with degrees in $X$ bounded by $O(\delta_X sd (sd +\delta_Y)(sd+\delta_X+\delta_Y))$ and degrees in $Y$ bounded by $O(sd+(\delta_Y-1)sd(sd +\delta_Y)(sd+\delta_X+\delta_Y))$.
The complexity of solving the linear system is  $O(d^6(d \delta_Y+\delta_X)^2)$. Then, the total complexity of this step if of order
$$ O(s^2d^5 (d \delta_Y+\delta_X) (sd+\delta_X + \delta_Y)^3 ((sd)^4 \log^3(sd+\delta_X + \delta_Y)+ (sd+\delta_Y)^{\omega -1})+ (sd)^4 (sd+\delta_X+\delta_Y)^2 (sd+\delta_Y))=$$
$$= O(s^2d^5 (d \delta_Y+\delta_X) (sd+\delta_X + \delta_Y)^3 ((sd)^4 \log^3(sd+\delta_X + \delta_Y)+ (sd+\delta_Y)^{\omega -1})).$$

Therefore, the overall complexity for determining the feasible sign conditions of $g_1,\dots, g_s$ over $Z$ is of order
$$O(s^3d^5 (d \delta_Y+\delta_X) (sd+\delta_X + \delta_Y)^3 ((sd)^4 \log^3(sd+\delta_X + \delta_Y)+ (sd+\delta_Y)^{\omega -1}))$$
and the procedure requires $O(s^5d^7(d \delta_Y+\delta_X) (sd+\delta_X+\delta_Y)^2 (sd+\delta_Y) )$
calls to the oracle for functions with defining polynomials of degrees in $X$ bounded by $O(\delta_X sd (sd +\delta_Y)(sd+\delta_X+\delta_Y))$ and degrees in $Y$ bounded by $O(sd+(\delta_Y-1)sd(sd +\delta_Y)(sd+\delta_X+\delta_Y))$.

Now, to determine \emph{all} the feasible sign conditions of a family of Pfaffian functions
$g_1, \dots, g_s$ associated with $\varphi$ over the interval, we apply the previous procedure as follows:
\begin{enumerate}
\item For $j = 1, \dots, s$, compute the feasible sign conditions of $g_1, \dots, g_{j-1}, g_{j+1}, \dots, g_s$ over the zero set of $g_j$ in the given interval. Although redundant, this procedure gives all the feasible sign conditions in which one of the functions is equal to zero.
\item Consider the function $f= \left(\prod_{1 \le j \le s} g_j\right)'$ and compute the feasible sign conditions of $g_1, \dots, g_s$ over the zero set of $f$. This gives the feasible sign conditions between two consecutive zeros of the functions $g_j$, $1 \le j \le s$, and, therefore, all the sign conditions over these functions consisting only of inequalities.
\end{enumerate}

Note that, taking into account the upper bounds for the number of zeros of $g_1, \dots, g_s$ and $(\prod_{1\le i \le s} g_i)'$ (see \cite[Corollary 17]{BJS16}), the number of feasible sign conditions of $g_1,\dots, g_s$ in an interval contained in $\Dom(\varphi)$ is bounded by $O(\delta_Y(sd +\delta_X+\delta_Y)^4)$.

In step 2, the polynomial defining $f$ has degree bounded by $sd+\delta_X + \delta_Y -1$.
Then, the number of zeros of  $f$ is of order $O((sd+\delta_X+\delta_Y)^4 \delta_Y)$ and the complexity of computing each of the required Tarski queries is of order  $O((sd+\delta_X + \delta_Y)^9  \log^3(sd+\delta_X + \delta_Y))$. Therefore, the complexity of this step is
$$O(s(sd+\delta_X + \delta_Y)^{13}  \log^3(sd+\delta_X + \delta_Y) \delta_Y) + (sd+\delta_X+\delta_Y)^8 \delta_Y^2)) =$$
$$=O(s(sd+\delta_X + \delta_Y)^{13}  \log^3(sd+\delta_X + \delta_Y) \delta_Y ) $$
and requires $O(s (sd+\delta_X+\delta_Y)^{11} \delta_Y)$ calls to the oracle for functions defined by polynomials of degrees in $X$ bounded by $O(\delta_X s^3 (sd+\delta_X+\delta_Y)^3)$ and degrees in $Y$ bounded by $O(s(sd+\delta_X+\delta_Y)+(\delta_Y-1)s^3(sd+\delta_X+\delta_Y)^3)$.

Finally, we call the oracle to compute the signs  of $g_1, \dots, g_s$ in $\alpha$ and in $\beta$.
This finishes the computation of all the feasible sign conditions of the functions
$g_1, \dots, g_s$ in $[\alpha, \beta]$ within complexity of order
$O(s\delta_Y(sd+\delta_X + \delta_Y)^{13}  \log^3(sd+\delta_X + \delta_Y))$ and with
$O(s (sd+\delta_X+\delta_Y)^{11} \delta_Y)$ calls to the oracle for functions defined by polynomials with degrees in $X$ bounded by $O(\delta_X s^3 (sd+\delta_X+\delta_Y)^3)$ and degrees in $Y$ bounded by $O(s(sd+\delta_X+\delta_Y)+(\delta_Y-1)s^3(sd+\delta_X+\delta_Y)^3)$.
	
Therefore, we have proved the following:

\begin{theorem} \label{thm:signconditions} Let  $\varphi$ be a Pfaffian function satisfying $\varphi'(x) = \Phi(x, \varphi (x))$ for $\Phi \in \Z[X,Y]$ with $\deg_Y(\Phi)> 0$. Let $g_1, \dots, g_s$ be functions defined by  $g_i(x) = G_i(x, \varphi(x))$ for $G_i \in \Z[X,Y]$. If $\deg G_i \le d$ for $1\le i \le s$, $\deg_X (\Phi) = \delta_X$ and $\deg_Y (\Phi) = \delta_Y$, then, all the feasible sign conditions for  $g_1, \dots,g_s$ in an interval $[\alpha,\beta] \subseteq \hbox{Dom}(\varphi)$, for $\alpha, \beta$ real algebraic numbers, can be determined by means of a symbolic procedure with complexity $O(s\delta_Y(sd+\delta_X + \delta_Y)^{13}  \log^3(sd+\delta_X + \delta_Y)  )  $ and with  $O(s (sd+\delta_X+\delta_Y)^{11} \delta_Y)$ calls to an oracle for determining signs of functions associated with $\varphi$ defined by polynomials in $\Z[X,Y]$ of degrees in $X$ bounded by $O(\delta_X s^3 (sd+\delta_X+\delta_Y)^3)$ and degrees in $Y$ bounded by $O(s(sd+\delta_X+\delta_Y)+(\delta_Y-1)s^3(sd+\delta_X+\delta_Y)^3)$.
\end{theorem}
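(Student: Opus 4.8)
The plan is to reduce the computation of all feasible sign conditions to a bounded number of calls to Algorithm \texttt{Tarski-query} of Section \ref{subsec:TQalgorithm}, together with the sign-determination scheme of \cite{Perrucci11} (based in turn on \cite[Chapter 2]{BPR}), which recovers the cardinalities of the sign conditions of a family on a finite set from suitable Tarski queries by solving structured linear systems.

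First I would settle the basic sub-problem: given one Pfaffian function $f$ associated with $\varphi$ and the family $g_1, \dots, g_s$, determine the feasible sign conditions of $g_1, \dots, g_s$ on the finite set $Z = \{x \in (\alpha, \beta) \mid f(x) = 0\}$. This is carried out recursively for $i = 1, \dots, s$: once the feasible sign conditions of $g_1, \dots, g_{i-1}$ on $Z$ together with their cardinalities are known, each of them yields at most three candidates at step $i$, and the vector of their cardinalities is obtained by solving a linear system whose right-hand side consists of the Tarski queries $\TaQ(f, g_1^{a_1} \cdots g_i^{a_i}; \alpha, \beta)$ with $a_j \in \{0, 1, 2\}$. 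Here Theorem \ref{thm:TarskiQuery} guarantees correctness of each Tarski query and Proposition \ref{prop:complexityTQ} gives its cost; since $g_1^{a_1} \cdots g_i^{a_i}$ is defined by a polynomial of degree $O(sd)$ and, by \cite[Corollary 17]{BJS16}, $\# Z = O(d^3(d\delta_Y + \delta_X))$, the number of Tarski queries and the sizes of the linear systems at each step are controlled, as is the overall cost of this sub-routine.

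To pass from ``sign conditions on a prescribed zero set'' to ``all feasible sign conditions in $[\alpha, \beta]$'', I would invoke the sub-routine $s + 1$ times. Running it with $f = g_j$ for each $j = 1, \dots, s$ produces every feasible sign condition in which at least one $g_j$ vanishes. Running it with $f = \bigl(\prod_{1 \le j \le s} g_j\bigr)'$ produces the sign conditions realized on an open subinterval: between two consecutive zeros of $\prod_j g_j$ all the $g_j$ have constant nonzero sign, and by Rolle's theorem such an interval contains a zero of $\bigl(\prod_j g_j\bigr)'$, so evaluating the $g_j$ at the zeros of this derivative detects every sign condition consisting only of strict inequalities; a single oracle call per $g_j$ at each of $\alpha$ and $\beta$ handles the two extreme intervals. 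Combining the three sources gives exactly the list of feasible sign conditions. The polynomial defining $\bigl(\prod_j g_j\bigr)'$ has degree at most $sd + \delta_X + \delta_Y - 1$, so its zero set has cardinality $O((sd + \delta_X + \delta_Y)^4 \delta_Y)$ by \cite[Corollary 17]{BJS16}, and this run dominates the whole computation.

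The main obstacle I anticipate is the bookkeeping of the degrees and multiplicities feeding the oracle. A single Tarski query passes through a subresultant sequence, and then, via Remark \ref{rem:lateralsign} and the multiplicity bound \eqref{eq:multiplicity} borrowed from \cite{BJS16}, the oracle must be queried on \emph{derivatives} of the subresultant polynomials at the relevant algebraic numbers; tracking how this inflates the degrees --- up to the bounds $O(\delta_X s^3(sd + \delta_X + \delta_Y)^3)$ in $X$ and $O(s(sd + \delta_X + \delta_Y) + (\delta_Y - 1) s^3(sd + \delta_X + \delta_Y)^3)$ in $Y$ stated in the theorem --- and counting the resulting oracle calls is the delicate part. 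Once this is done, assembling the arithmetic complexity $O(s\delta_Y (sd + \delta_X + \delta_Y)^{13} \log^3(sd + \delta_X + \delta_Y))$ is routine: multiply the number of Tarski queries (of order the number of zeros of $f$ times the bound $O(\delta_Y(sd + \delta_X + \delta_Y)^4)$ on the number of feasible sign conditions, again from \cite[Corollary 17]{BJS16}) by the per-query cost of Proposition \ref{prop:complexityTQ}, and verify that the linear-algebra terms of order $O(d^6(d\delta_Y + \delta_X)^2)$ are absorbed, while the final endpoint evaluations contribute only lower-order oracle calls.
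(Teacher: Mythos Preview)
Your proposal is correct and follows essentially the same route as the paper: the same Perrucci-style recursive sign determination on a fixed zero set via Tarski queries (with the bound $\#Z = O(d^3(d\delta_Y+\delta_X))$ from \cite[Corollary 17]{BJS16}), followed by the same $s+1$ applications (one with $f=g_j$ for each $j$ and one with $f=(\prod_j g_j)'$) together with the endpoint evaluations at $\alpha,\beta$, and the same dominance argument showing the run with $f=(\prod_j g_j)'$ controls the final complexity. The degree and oracle-call bookkeeping you flag as the delicate point is exactly what the paper tracks, and your estimates match theirs.
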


As a consequence of Theorem \ref{thm:signconditions} we can establish a complexity result for the decision problem under consideration which implies Theorem \ref{thm:Pfaffiandecision} in the Introduction:

\begin{corollary}
Let $\varphi$ be a Pfaffian function satisfying  $\varphi'(x) = \Phi(x, \varphi (x)$ for $\Phi\in \Z[X,Y]$ with $\deg_X(\Phi) = \delta_X$ and $\deg_Y (\Phi) = \delta_Y > 0$.
Let $\Psi$ be a quantifier-free formula in a variable  $x$ involving  functions  $g_1, \dots, g_s$ defined by $g_i(x) = G_i(x, \varphi(x))$ for $G_i \in \Z[X,Y]$ with  $\deg G_i \le d$, for $1\le i \le s$. There is a symbolic procedure that determines the truth value of the formula $\exists x \, \Psi$  in an interval $[\alpha, \beta] \subseteq \hbox{Dom}(\varphi)$, for $\alpha<\beta$ real algebraic numbers, within complexity $O(s\delta_Y(sd+\delta_X + \delta_Y)^{13} \log^3(sd+\delta_X + \delta_Y) + \delta_Y (sd +\delta_X+\delta_Y)^4 |\Psi| )$, where $|\Psi|$ denotes the length of $\Psi$.
\end{corollary}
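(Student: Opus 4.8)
The plan is to derive this Corollary from Theorem~\ref{thm:signconditions} by a standard reduction of the decision problem for $\exists x\,\Psi$ to the enumeration of feasible sign conditions. First I would apply Theorem~\ref{thm:signconditions} to the family $g_1,\dots,g_s$ on the interval $[\alpha,\beta]\subseteq\Dom(\varphi)$, obtaining the complete list of feasible sign conditions $\sigma=(\sigma_1,\dots,\sigma_s)\in\{<,=,>\}^s$; recall from the discussion preceding that theorem that there are at most $O(\delta_Y(sd+\delta_X+\delta_Y)^4)$ of them, and that the symbolic part of the computation costs $O(s\delta_Y(sd+\delta_X+\delta_Y)^{13}\log^3(sd+\delta_X+\delta_Y))$ arithmetic operations (plus the oracle calls quoted in the theorem). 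The key observation is that for any realizable sign condition $\sigma$, the truth value of the atomic formulas $g_i(x)\,\square\,0$ is constant on the whole cell of $[\alpha,\beta]$ realizing $\sigma$, hence the truth value of the Boolean combination $\Psi$ is also constant on that cell, and in particular $\exists x\,\Psi$ holds in $[\alpha,\beta]$ if and only if there is at least one feasible $\sigma$ for which $\Psi$ evaluates to true under the assignment "$g_i$ has sign $\sigma_i$".

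So the procedure is: compute the list of feasible sign conditions; then, for each feasible $\sigma$, substitute $\sigma_i$ for each occurrence of an atom involving $g_i$ in $\Psi$ and evaluate the resulting variable-free Boolean formula; output \emph{true} as soon as one $\sigma$ makes $\Psi$ true, and \emph{false} otherwise. Evaluating $\Psi$ under a fixed sign assignment takes $O(|\Psi|)$ Boolean operations, so the evaluation loop contributes $O(|\Psi|)$ times the number of feasible sign conditions, i.e.\ $O(\delta_Y(sd+\delta_X+\delta_Y)^4\,|\Psi|)$ operations. Adding this to the cost of Theorem~\ref{thm:signconditions} gives the claimed bound $O(s\delta_Y(sd+\delta_X+\delta_Y)^{13}\log^3(sd+\delta_X+\delta_Y)+\delta_Y(sd+\delta_X+\delta_Y)^4|\Psi|)$, and the oracle usage is exactly that inherited from Theorem~\ref{thm:signconditions}. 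A minor point to address is the hypothesis $\deg_Y(G_i)>0$ needed to invoke the Pfaffian machinery: if some $g_i$ is a genuine polynomial in $x$ one can either handle it by the classical univariate real algebra (as in the Remark after Algorithm \texttt{Tarski-query}) or replace $G_i$ by $(Y^2+1)G_i$, which does not change the sign of $g_i$ and raises the degree only by $2$; similarly $g_s$ in step~2 is replaced by $\bigl(\prod_j g_j\bigr)'$, whose defining polynomial still has degree $O(sd+\delta_X+\delta_Y)$.

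I do not expect any serious obstacle here: the content is entirely in Theorem~\ref{thm:signconditions}, and what remains is the routine observation that a quantifier-free formula is decided by the sign-condition data plus a linear-time Boolean evaluation. The only place that requires a little care is the bookkeeping that makes the two summands of the complexity bound come out as stated --- in particular checking that the number of feasible sign conditions is indeed $O(\delta_Y(sd+\delta_X+\delta_Y)^4)$ (which is quoted just before Theorem~\ref{thm:signconditions}) so that the $|\Psi|$-term has the coefficient $\delta_Y(sd+\delta_X+\delta_Y)^4$ and not something larger, and checking that the degree bounds for the oracle calls are preserved under the $(Y^2+1)$-trick and the differentiation in step~2. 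None of this changes the dominant term, so the final estimate follows.
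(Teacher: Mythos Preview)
Your proposal is correct and follows exactly the approach the paper intends: the Corollary is stated there without proof, simply as ``a consequence of Theorem~\ref{thm:signconditions}'', and what you have written spells out that consequence --- compute all feasible sign conditions via Theorem~\ref{thm:signconditions}, then evaluate $\Psi$ on each of the $O(\delta_Y(sd+\delta_X+\delta_Y)^4)$ resulting tuples at cost $O(|\Psi|)$ apiece. Your bookkeeping on the two summands of the complexity and the handling of the degenerate cases $\deg_Y(G_i)=0$ match the paper's conventions (see the Remark after Algorithm \texttt{Tarski-query}).
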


\section{E-polynomials} \label{sec:Epolynomials}

In this section we deal with the particular case of E-polynomials, namely when $\varphi(x) = e^{h(x)}$ for a polynomial $h\in \Z[X]$ of positive degree. In this case, our algorithms will run without need of an oracle.

First, we will show how the procedures described in Section  \ref{sec:decision} can be turned into standard symbolic algorithms (not relying on oracles) by using a subroutine for determining the sign of an E-polynomial at a real algebraic number given by its Thom encoding (see \cite[Section 5.1]{BJS16}). Then, we will apply our techniques to solve algorithmically the decision problem for a particular class of formulas involving multivariate E-polynomials. Finally, we will introduce a suitable notion of Thom encoding for the zeros of a univariate E-polynomial and analyze the complexity of its computation.

\subsection{Decision problem}\label{sec:epolydecision}

When dealing with arbitrary Pfaffian functions of the class introduced in Section \ref{subsec:functionclass}, in order to determine the sign that the function takes at a real algebraic number we rely on an oracle.
For E-polynomials, these signs and, consequently, the signs to the left or to the right of an algebraic number that are required for the computation of Tarski-queries, can be computed explicitly. Here we estimate the complexity of these computations.

\begin{proposition}\label{prop:signdetermination}
Let $h\in \Z[X]$ be a polynomial of degree $\delta>0$ and, for $P \in \Z[X,Y]$ with $\deg_X(P) = d_X$ and $\deg_Y(P)= d_Y$, let $p(x) =P(x, e^{h(x)})$. Let $\alpha\in \R$ be a root of a polynomial $L \in \Z[X]$ with $\deg(L) = \ell$ given by its Thom encoding. Then, we can determine the signs $\sg(p, \alpha^+)$ and $\sg(p, \alpha^-)$ within complexity $O(\ell^4 d_Y^5 (d_X+\delta) (\mathcal{H} + (\ell d_Y)^6
(\ell d_Y+ \log (\mathcal{H}))^2))$, where $\mathcal{H}= (\ell + d_X+ d_Y (2d_X+\delta)(\delta -1))! H(L)^{d_X +d_Y (2d_X+\delta)(\delta -1)}((d_Y+1)H(P) (d_Y (2d_X+\delta)(\delta -1) +d_X+d_Y \delta^2 H(h))^{ d_Y (2d_X+\delta)})^\ell$,  assuming $d_X\ge \delta $.
\end{proposition}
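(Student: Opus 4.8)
The goal is to determine the lateral signs $\sg(p,\alpha^+)$ and $\sg(p,\alpha^-)$ of the E-polynomial $p(x)=P(x,e^{h(x)})$ at an algebraic number $\alpha$ given by its Thom encoding as a root of $L$. By Remark \ref{rem:lateralsign}, this reduces to computing the sign of $p^{(r)}(\alpha)$, where $r=\mult(\alpha,p)$, together with a sign factor $(-1)^r$ for the $\alpha^-$ case. So the plan splits into three tasks: (i) produce polynomials in $\Z[X,Y]$ whose substitution $Y=e^{h(x)}$ yields the successive derivatives $p',p'',\dots$, (ii) bound the relevant multiplicity $r$, and (iii) for each derivative, decide the sign of a number of the form $Q(\alpha,e^{h(\alpha)})$ at the algebraic number $\alpha$, stopping at the first nonzero one.

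For task (i), I would use the recursion \eqref{eq:Ftilde} with $\Phi(X,Y)=h'(X)Y$: if $p^{(\nu)}(x)=P^{(\nu)}(x,e^{h(x)})$ then $P^{(\nu+1)}=\dfrac{\partial P^{(\nu)}}{\partial X}+\dfrac{\partial P^{(\nu)}}{\partial Y}\,h'(X)Y$, which is a routine calculation controlling how $\deg_X$ and $\deg_Y$ grow: each differentiation raises $\deg_X$ by at most $\delta-1$ and $\deg_Y$ by at most $1$. For task (ii), I would invoke the multiplicity bound \eqref{eq:multiplicity}: $r\le 2d_Xd_Y+d_X(\delta-1)+(\delta+1)d_Y = O(d_Y(d_X+\delta))$, so only $O(d_Y(d_X+\delta))$ derivatives need be examined, and the polynomial $P^{(r)}$ defining the last one has $\deg_X = O(d_Y(d_X+\delta)(\delta-1)+d_X)$ and $\deg_Y = O(d_Y)$. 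For task (iii), I would apply the sign-determination subroutine for E-polynomials at a Thom-encoded algebraic number from \cite[Section 5.1]{BJS16}; the complexity of one such call depends on $\deg(L)=\ell$, on $H(L)$, on the degrees and height of the polynomial $Q$ defining the E-polynomial, and on $H(h),\deg(h)=\delta$. The key bookkeeping is to track the height $H(P^{(\nu)})$ through the recursion: differentiating and multiplying by $h'(X)Y$ multiplies heights by factors involving $H(h)$, $\delta$ and the current degrees, and after $r$ steps one obtains the bound $\mathcal H$ quoted in the statement (the factorial and the $H(L)$-power coming from the sign-determination subroutine's internal resultant/root-separation estimates, the rest from $H(P^{(r)})$).

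The main obstacle is the height analysis, not the degree analysis: one must carefully estimate $H(P^{(\nu)})$ for $\nu$ up to $r = O(d_Y(d_X+\delta))$, keeping the dependence on $H(P)$, $H(h)$, $\delta$ and the growing degrees explicit so that the final expression matches $\mathcal H$. Concretely, I expect a bound of the shape $H(P^{(\nu)}) \le H(P)\cdot\big(c\,(d_X+\nu\delta)\,\delta^2 H(h)\big)^{\nu}$ up to polynomial-in-the-degrees factors, which at $\nu=r$ produces the factor $((d_Y+1)H(P)(\cdots)^{d_Y(2d_X+\delta)})$ in $\mathcal H$; combining this with the cost of $O(d_Y(d_X+\delta))$ calls to the \cite{BJS16} subroutine — each costing, for input height $\mathcal H$ and input degrees as above, the displayed $O(\ell^4 d_Y^4 \mathcal H + \ell^{10}d_Y^{10}(\ell d_Y+\log\mathcal H)^2)$-type bound — and absorbing lower-order terms yields the claimed overall complexity. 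Finally I would note that once the first nonzero derivative $p^{(r)}$ is found, $\sg(p,\alpha^+)=\sg(p^{(r)}(\alpha))$ and $\sg(p,\alpha^-)=(-1)^r\sg(p^{(r)}(\alpha))$ by Remark \ref{rem:lateralsign}, completing the determination.
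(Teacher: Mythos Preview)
Your plan is essentially identical to the paper's proof: reduce via Remark~\ref{rem:lateralsign} to computing $\sg(p^{(\nu)}(\alpha))$ for $\nu$ up to $\mult(\alpha,p)$, bound this multiplicity by~\eqref{eq:multiplicity} (for E-polynomials $\Phi=h'(X)Y$, so $\delta_X=\delta-1$, $\delta_Y=1$, giving exactly $d_Y(2d_X+\delta)$), control the degrees and height of $\widetilde P^{(\nu)}$ via the recursion, and invoke \texttt{E-SignDetermination} from~\cite{BJS16}. One small correction: since $\Phi=h'(X)Y$ has $\deg_Y=1$, differentiation keeps $\deg_Y(\widetilde P^{(\nu)})\le d_Y$ throughout (it does not increase), which is why your later claim $\deg_Y=O(d_Y)$ is consistent and why $d_Y$ (rather than something of order $r$) appears in $\mathcal H$ and in the final complexity.
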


\begin{proof}{Proof.} To compute the signs $\sg(p, \alpha^+)$ and $\sg(p, \alpha^-)$
we follow Remark \ref{rem:lateralsign}. Then, it suffices to determine the signs $p^{(\nu)}(\alpha)$ for $0\le \nu\le  \mult (\alpha,p)$. Now, by equation \eqref{eq:multiplicity}, we have that $\mult (\alpha,p) \le d_Y (2d_X+\delta)$. Taking into account that $\deg_X(\widetilde P ^{(\nu)})\le d_X + \nu (\delta -1)$, $\deg_Y (\widetilde P^{(\nu)})\le d_Y$ and $H(\widetilde P ^{(\nu)})\le H(P) \prod_{j=0}^{\nu-1}(j(\delta -1) + d_X + d_Y \delta^2 H(h))$ (see \cite[Remark 19]{BJS16}), this can be done  within the stated complexity by applying the algorithm \texttt{E-SignDetermination} from \cite{BJS16}.
\end{proof}

From the results in Section \ref{subsec:TQalgorithm}, replacing calls to the oracle by the above sign determination, we deduce the following complexity result for the computation of Tarski-queries for E-polynomials.

\begin{proposition}\label{prop:TQepoly}
Let $f(x) = F(x, e^{h(x)})$ and $g(x)= G(x,e^{h(x)})$  be $E$-polynomials defined by $F, G\in \Z[X,Y]$ and $h\in \Z[X]$ with
$\deg(F), \deg(G), \deg(h) \le d$ and $H(F), H(G), H(h) \le H$, and let $(\alpha, \beta)$ be an  interval, where $\alpha, \beta$ are real algebraic numbers given by their Thom encodings, $-\infty$ or $+\infty$.  There is an algorithm that computes $\TaQ(f,g;\alpha,\beta)$  within complexity $(2dH)^{O(d^6)}$.
\end{proposition}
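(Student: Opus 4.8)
The plan is to combine Algorithm \texttt{Tarski-query} from Section \ref{subsec:TQalgorithm} with the explicit sign-determination procedure for E-polynomials from Proposition \ref{prop:signdetermination}, which replaces every call to the oracle. First I would invoke Proposition \ref{prop:complexityTQ} with $\Phi(X,Y) = h'(X)Y$, so that $\delta_X = \deg(h)-1 \le d-1$ and $\delta_Y = 1$: this already gives the number of arithmetic operations and the number of oracle calls in terms of $d$ alone, namely $d^{O(1)}$ arithmetic operations and $d^{O(1)}$ oracle calls, the latter for E-polynomials whose defining bivariate polynomials have degrees in $X$ and $Y$ bounded by $d^{O(1)}$. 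So the only thing left is to bound the \emph{cost of each oracle call} — i.e. each application of Proposition \ref{prop:signdetermination} — and to track how the heights of the intermediate polynomials grow, since the bit-size of the coefficients is what makes the bound superpolynomial.

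The key steps, in order, are: (i) specialize all the degree bounds in the complexity analysis of Section \ref{subsec:TQalgorithm} to the E-polynomial case and record that the polynomials $R_i$, $\tau_i$, $\rho_i$ produced in Notation \ref{not:subres}, being subresultants of $\widetilde F G$ and $F$, have degrees $d^{O(1)}$; (ii) bound the heights of these subresultants and of their derivatives $\widetilde{R_i}^{(\nu)}$ using the height estimates for products of bivariate polynomials \eqref{eq:productheight}, the Remark 19 height bound from \cite{BJS16} quoted in the proof of Proposition \ref{prop:signdetermination}, and the standard Hadamard-type bound for subresultant determinants — this produces a height of the form $H^{d^{O(1)}} 2^{d^{O(1)}}$; (iii) bound the height $\ell$ of the polynomial $L$ of \eqref{poliL} whose roots give the critical points $\alpha_j$, and the Thom encodings we feed to the sign-determination routine: $L$ is a product of $d^{O(1)}$ polynomials each of degree $d^{O(1)}$ and height $H^{d^{O(1)}}2^{d^{O(1)}}$, so $\deg(L) = d^{O(1)}$ and $H(L) = H^{d^{O(1)}}2^{d^{O(1)}}$; (iv) plug these bounds for $\ell = \deg(L)$, $H(L)$, $d_X$, $d_Y$, $H(P)$, $\delta$ into the complexity expression of Proposition \ref{prop:signdetermination}; the quantity $\mathcal{H}$ there is a factorial of something of size $d^{O(1)}$ times $H(L)$ and $H(P)$ raised to powers $d^{O(1)}$, hence $\mathcal{H} = (dH)^{d^{O(1)}}\cdot d^{O(d^{O(1)})} = (2dH)^{O(d^6)}$ after absorbing the factorial via $n! \le n^n$; (v) multiply the per-call cost by the $d^{O(1)}$ number of calls and add the $d^{O(1)}$ purely arithmetic operations, observing that the dominant term is the per-call cost, so the total is $(2dH)^{O(d^6)}$.

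The main obstacle will be the careful bookkeeping in step (ii)–(iii): the polynomials at which we must call the sign-determination routine are not $F$ and $G$ themselves but the subresultants $R_i$ (and their iterated derivatives $\widetilde{R_i}^{(\nu)}$, up to the multiplicity bound \eqref{eq:multiplicity}), so one must verify that the degree-in-$X$, degree-in-$Y$ and height parameters of these objects are still of the shape that makes $\mathcal{H}$ of the form $(2dH)^{O(d^6)}$ rather than, say, $(2dH)^{O(d^{7})}$. Concretely one has to check that the exponent $6$ in Proposition \ref{prop:signdetermination}'s $(2dH)^{O(d^6)}$-type bound is not worsened when $d_X$, $d_Y$ and $\log\mathcal{H}$ are themselves polynomial in $d$ of bounded degree; this is a matter of confirming that the product $d_Y^5(d_X+\delta)\cdot (\ell d_Y)^{6}$ and the factorial in $\mathcal H$ contribute exponents that, once everything is $d^{O(1)}$, collapse into a single $O(d^6)$ in the exponent of $(2dH)$. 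Since the statement only asks for the coarse bound $(2dH)^{O(d^6)}$, once all parameters are seen to be $d^{O(1)}$ and all heights $(2dH)^{d^{O(1)}}$, the estimate follows by routine (if tedious) substitution, and I would present it at that level of detail rather than tracking every constant.
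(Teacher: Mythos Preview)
Your proposal is correct and follows essentially the same route as the paper: replace each oracle call in Algorithm \texttt{Tarski-query} by the explicit sign-determination of Proposition \ref{prop:signdetermination}, bound the degrees and heights of the subresultants $R_i$, $\tau_i$, $\rho_i$ and of $L$, and substitute into the formula for $\mathcal{H}$ to obtain $(2dH)^{O(d^6)}$. The paper carries out the height computations with explicit constants (e.g.\ $H(R_i)\le 2^{5d}3^{4d}d^{13d}H^{5d}$, $\deg L\le (2d+3)(5d^2-d)$), whereas you stay at the $d^{O(1)}$ level, but since the target bound is already of the coarse form $(2dH)^{O(d^6)}$ this is harmless; the only thing you omit is the one-line treatment of the case $\alpha=-\infty$ or $\beta=+\infty$, which the paper handles by noting that the needed signs at infinity can be read off directly as in \cite[Section 5.2]{BJS16}.
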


\begin{proof}{Proof.} First, we consider the case $\alpha, \beta \in \R$. The algorithm is based on Algorithm  \texttt{Tarski-query} but each call to the oracle will be replaced by a call to the algorithm of Proposition \ref{prop:signdetermination}. To be able to compute the overall complexity of this algorithm, we will bound the heights of the polynomials involved.
Note that, under our hypothesis, $\deg_X(\widetilde F G) \le 3d-1$ and $\deg_Y(\widetilde F G) \le 2d$ and, therefore,
$\deg_X (R_i) \le 5d^2-d$ y $\deg_Y (R_i) \le d$ para $0 \le i \le N$.

As $H(\widetilde F) \le H d (1+d^2H)$ and $\deg(\widetilde F) \le 2d-1$, Equation \eqref{eq:productheight} implies that  $H(\widetilde F G) \le 4d^3 H^2 (1+d^2H) (d+1)^2 \le 32 d^7 H^3$. Then, using the determinantal formula for the resultants analyzing their expansions along the last column, we have that
$H(R_i) \le (3d)! H^{2d} (32 d^7 H^3)^d
 (d+1)^{2d} (3d)^d \le  2^{5d}3^{4d} d^{13d} H^{5d}$, since $(3d)! (d+1)^{2d} \le 3^{3d}d^{5d}$.

For $0 \le i \le N$,  $\deg(\rho_i)$ and  $\deg(\tau_i)$ are less than or equal to $5d^2-d$ and their heights are at most $ 2^{5d}3^{4d} d^{13d} H^{5d}$;  then, $\deg(L) \le (2d+3)(5d^2-d)$ and
$H(L) \le   (2^{5d}3^{4d} d^{13d} H^{5d})^{2d+3}(5d^2-d+1)^{2d+3} \le (5\cdot 2^{5d}3^{4d} d^{13d+2} H^{5d})^{2d+3}$.

Therefore, computing each of the signs $sg(R_i(x,e^{h(x)}, \alpha_j^+)$ or $sg(R_i(x,e^{h(x)}, \alpha_j^-)$ following Proposition \ref{prop:signdetermination}, costs
$O(d^{12}d^5 d^2 (\mathcal{H}+d^{24}(d^4+\log (\mathcal{H}))^2))= O(d^{19}(\mathcal{H}+d^{24}(d^4+\log (\mathcal{H}))^2)) $ where
$\mathcal{H} \le ((2d+3) (5d^2-d)+5d^2-d+d(10d^2-d)(d-1))! (5\cdot 2^{5d}3^{4d} d^{13d+2} H^{5d})^{(2d+3)(10d^4-5d^3)} ((d+1) 2^{5d}3^{4d} d^{13d} H^{5d}(10d^4-5d^3+d^3 H)^{10d^3-d^2})^{(2d+3)(5d^2-d)} = (2dH)^{O(d^6)}$. Note that computing all these signs and the signs of  $f(\alpha_j)$ and $g(\alpha_j)$ does not increase the order of complexity.

In case $\alpha = -\infty$ or $\beta = +\infty$, we also adapt Algorithm \texttt{Tarski-query}, taking into account that non-bounded intervals appear and, therefore, we have to determine the signs that a sequence of E-polynomials take at $-\infty $  or $+\infty$. This can be easily done as stated in \cite[Section 5.2]{BJS16}.
\end{proof}

As explained in Section \ref{sec:decision}, from the complexity of computing Tarski-queries, we can deduce the complexity for the determination of all the feasible sign conditions on a finite family of E-polynomials:

\begin{proposition}\label{prop:signconditiosexp}
Let $g_1, \dots, g_s$ be $E$-polynomials defined by $g_i(x) = G_i(x, e^{h(x)})$ where $G_i\in \Z[X,Y]$, for $1 \le i \le s$, and $h\in \Z[X]$ are polynomials of degrees bounded by $d$ and heights bounded by $H$. There exists an algorithm that determines all the feasible sign conditions for $g_1, \dots, g_s$ within complexity $(2dH)^{O(s^7d^6)}$.
\end{proposition}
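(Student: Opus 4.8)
The plan is to follow the two-phase strategy for computing all feasible sign conditions that was already set up in Section~\ref{sec:decision} for general Pfaffian functions, and to track the growth of degrees and heights of the intermediate polynomials carefully so that every call to an oracle is replaced by a call to the sign-determination routine of Proposition~\ref{prop:signdetermination}, whose cost then gets folded into the overall bound. Concretely, I would first recall that to determine all feasible sign conditions for $g_1,\dots,g_s$ on $[\alpha,\beta]$ one (i)~for each $j$, computes the feasible sign conditions of the remaining $g_i$ over the zero set of $g_j$, and (ii)~computes the feasible sign conditions of $g_1,\dots,g_s$ over the zero set of $f=(\prod_{i} g_i)'$; then one adds the signs at the endpoints. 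Each such computation is driven by a bounded number of Tarski-queries of the form $\TaQ(f,g_1^{\alpha_1}\cdots g_s^{\alpha_s};\alpha,\beta)$ with $\alpha_i\in\{0,1,2\}$, together with linear-algebra over $\Q$.

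The key quantitative inputs are: the number of feasible sign conditions is bounded polynomially in $sd$ and $d$ (this follows from the zero-counting bound of \cite[Corollary~17]{BJS16} applied as in Section~\ref{sec:decision}); the E-polynomials $g_1^{\alpha_1}\cdots g_s^{\alpha_s}$ are defined by bivariate polynomials of degree $O(sd)$ and, crucially, of height at most $(2dH)^{O(sd)}$ by repeated application of the height bound \eqref{eq:productheight}; and the polynomial $f=(\prod g_i)'$ is defined by a polynomial of degree $O(sd)$ and height again $(2dH)^{O(sd)}$. Feeding these degree and height bounds into Proposition~\ref{prop:TQepoly} — whose cost is $(2dH)^{O(d^6)}$ when the defining polynomials have degree and height $\le d$ and $\le H$ — but now with the role of $d$ played by $O(sd)$ and the role of $H$ by $(2dH)^{O(sd)}$, yields a cost of $\bigl((2dH)^{O(sd)}\bigr)^{O((sd)^6)} = (2dH)^{O(s^7d^6)}$ for each Tarski-query. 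Since the number of Tarski-queries and the size of the linear systems are polynomial in $sd$, and solving those systems is polynomial in their size times the bit-size of the entries (which is again $(2dH)^{O(s^7d^6)}$), the total cost remains $(2dH)^{O(s^7d^6)}$. The sign computations at the endpoints $\alpha,\beta$ are subsumed in the same bound.

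I expect the main obstacle to be the height bookkeeping: one must verify that forming the products $g_1^{\alpha_1}\cdots g_s^{\alpha_s}$, forming $\widetilde F$, $\widetilde FG$, the subresultant polynomials $R_i$, and the polynomial $L=\prod\tau_i\prod\rho_i$ from these inputs only inflates the height to $(2dH)^{O(sd)}$ and the degree to $O(sd)$ — the same shape of bound as in the proof of Proposition~\ref{prop:TQepoly} but with $d\rightsquigarrow sd$, $H\rightsquigarrow(2dH)^{O(sd)}$. This is where one uses $H(\widetilde F)\le Hd(1+d^2H)$, the determinantal formula for subresultants expanded along the last column (giving $H(R_i)$ polynomial in the heights and degrees raised to a power $O(sd)$), and the fact that the quantity $\mathcal{H}$ appearing in Proposition~\ref{prop:signdetermination}, being a product of factorials and powers with exponents polynomial in the degrees, evaluates to $(2dH)^{O(s^7d^6)}$ after substitution. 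The rest — assembling the two phases, counting Tarski-queries, bounding the linear-algebra — is routine once these size estimates are in place, so the whole argument reduces to a careful but mechanical propagation of the bounds already proved for the single-function case.
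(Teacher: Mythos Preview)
Your approach is essentially the same as the paper's, and the overall structure is correct. There is, however, a genuine slip in the height bookkeeping that prevents your argument from reaching the stated exponent $O(s^7d^6)$.

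You claim that the products $g_1^{\alpha_1}\cdots g_s^{\alpha_s}$ (with $\alpha_i\in\{0,1,2\}$) and the function $f=(\prod_i g_i)'$ are defined by bivariate polynomials of height $(2dH)^{O(sd)}$. This is too loose: applying \eqref{eq:productheight} to at most $2s$ factors, each of degree $\le d$ and height $\le H$, gives
\[
H\Big(\prod_i G_i^{\alpha_i}\Big)\ \le\ H^{2s}(d+1)^{4s}\ =\ (2dH)^{O(s)},
\]
and similarly $H(\prod_i G_i)\le H^s(d+1)^{2s}$, from which $H(\widetilde F)\le 2H^{s+1}(d+1)^{2s}sd^3=(2dH)^{O(s)}$. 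This sharper bound is exactly what the paper uses.

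The distinction matters because of your arithmetic in the final step. You write
\[
\bigl((2dH)^{O(sd)}\bigr)^{O((sd)^6)} \;=\; (2dH)^{O(s^7d^6)},
\]
but $sd\cdot(sd)^6=(sd)^7=s^7d^7$, so with your height bound the cost would only be $(2dH)^{O(s^7d^7)}$, which does not match the proposition. With the correct height bound $(2dH)^{O(s)}$, one gets
\[
\bigl(2\cdot sd\cdot (2dH)^{O(s)}\bigr)^{O((sd)^6)} \;=\; (2dH)^{O(s)\cdot O(s^6d^6)} \;=\; (2dH)^{O(s^7d^6)},
\]
as required.

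A minor secondary remark: the paper streamlines your phase~(i) by computing feasible sign conditions over the zero set of the single function $\prod_i g_i$ (which contains every zero of every $g_j$) rather than looping over each $g_j$ separately; this is equivalent and does not affect the complexity order. Also, since $\mathrm{Dom}(e^{h(x)})=\R$, the endpoints are $\pm\infty$ rather than finite algebraic numbers, and one uses the sign-at-infinity computation of \cite[Section~5.2]{BJS16} in place of evaluation at $\alpha,\beta$.
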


\begin{proof}{Proof.}
It suffices to determine the feasible sign conditions for $g_1, \dots, g_s$ over the zero sets of $f := \prod_{i=1}^{s} g_i$ and $f'$, and to decide the signs of $g_1, \dots, g_s$ in $-\infty$ and in  $+\infty$.

Let $F (X,Y)=  \prod_{i=1}^{s} G_i(X,Y)$. Then $\deg (F) \le sd$  and $H(F) \le H^s (d+1)^{2s}$, and therefore, $\deg(\widetilde{F}) \le sd+d$ and
$H (\widetilde{F}) \le H^s (d+1)^{2s}sd (1+d^2H)\le 2H^{s+1} (d+1)^{2s}sd^3$.
The number of zeros of  $f$ and the number of zeros of  $f'$ are of order $O(s^4d^4)$.

By Proposition  \ref{prop:TQepoly}, the complexity of the computation of each Tarski query of the type  $\TaQ(f, g_1^{\alpha_1} \dots g_i^{\alpha_i})$ with $\alpha_j \in \{0,1,2\}$ or of the type $\TaQ(f', g_1^{\alpha_1} \dots g_i^{\alpha_i})$ with $\alpha_j \in \{0,1,2\}$ is of order  $(2(2sd)(2sd^3H^{s+1}(d+1)^{2s})^{O((sd)^6)} = (2dH)^{O(s^7 d^6)}$.

Note that the number of  Tarski queries to be computed, the solving of the associated linear systems  and the computations of the signs in $-\infty$ and in  $+\infty$ do not modify the order of complexity.
\end{proof}

Applying the previous proposition, we can also solve a decision problem algorithmically in the case of univariate E-polynomials but in this case without need of an oracle:

\begin{theorem}\label{thm:expdecision}
 Let $\Psi$ be a quantifier-free formula  in a variable  $x$ involving $E$-polynomials $g_1, \dots, g_s$ defined by $g_i(x) = G_i(x, e^{h(x)})$ with $G_i \in \Z[X,Y]$, for $1\le i \le s$, and $h\in \Z[X]$ polynomials with degrees bounded by $d$ and heights bounded by $H$. There is a symbolic algorithm that determines whether the formula $\exists x\, \Psi$ is true or false within complexity  $ (2dH)^{O(s^7d^6)}+ O( s^4d^4  |\Psi|)   $, where $|\Psi|$ denotes the length of $\Psi$.
\end{theorem}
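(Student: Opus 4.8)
The plan is to reduce the decision problem for $\exists x\,\Psi$ to the computation of all feasible sign conditions for the family $g_1,\dots,g_s$ on the interval, which is exactly what Proposition \ref{prop:signconditiosexp} provides. First I would observe that the truth value of $\exists x\,\Psi$ on $[\alpha,\beta]$ depends only on which sign conditions $(\sigma_1,\dots,\sigma_s)\in\{<,=,>\}^s$ are realized by $(g_1,\dots,g_s)$ somewhere in the interval: indeed, $\exists x\,\Psi$ holds if and only if there is some realized sign condition $\bm\sigma$ such that, when we substitute each atomic formula $g_i(x)\,\rho\,0$ appearing in $\Psi$ by its truth value determined by $\sigma_i$, the resulting Boolean combination evaluates to \emph{true}. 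This is because on the (finitely many) cells of the interval determined by the zero sets of the $g_i$, each $g_i$ has constant sign, so $\Psi$ has constant truth value on each cell.

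Next I would run the algorithm of Proposition \ref{prop:signconditiosexp} to obtain the list $\mathcal{S}\subseteq\{<,=,>\}^s$ of all feasible sign conditions for $g_1,\dots,g_s$ in $[\alpha,\beta]$; by that proposition this costs $(2dH)^{O(s^7d^6)}$ arithmetic operations (the statement there is for the whole real line, but the same bound holds, or is easily adapted, for a bounded interval with algebraic endpoints, and in any case $\exists x\,\Psi$ over all of $\mathbb{R}$ is the relevant instance). Then, for each $\bm\sigma\in\mathcal{S}$, I would evaluate the quantifier-free formula $\Psi$ under the corresponding Boolean assignment to its atoms; this requires a single pass over $\Psi$ per sign condition. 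Since $\#\mathcal{S} = O(s^4 d^4)$ by the count in the proof of Proposition \ref{prop:signconditiosexp} (the number of feasible sign conditions is bounded by the number of sign-invariant cells, which is $O(s^4d^4)$), and each pass costs $O(|\Psi|)$ Boolean operations, the evaluation phase adds $O(s^4 d^4\,|\Psi|)$ operations. Summing the two phases gives the claimed bound $(2dH)^{O(s^7d^6)} + O(s^4 d^4\,|\Psi|)$, and $\exists x\,\Psi$ is declared true precisely when at least one $\bm\sigma\in\mathcal{S}$ makes $\Psi$ true.

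The conceptual work here is minimal — the substance has already been done in building up Tarski-query computation (Proposition \ref{prop:TQepoly}) and the feasible-sign-condition routine (Proposition \ref{prop:signconditiosexp}). The only point requiring a little care is the bookkeeping of the complexity: one must check that the number of feasible sign conditions, $O(s^4d^4)$, really is the right factor multiplying $|\Psi|$, and that parsing $\Psi$ and evaluating it under a fixed sign condition is linear in its length; both are routine. The main obstacle, if any, is purely one of correctly matching the complexity accounting of Proposition \ref{prop:signconditiosexp} (which absorbs linear-system solving and the sign determinations at $\pm\infty$ into its stated bound) to the present statement, and making sure that the $|\Psi|$-dependent term is not swallowed by, nor does it swallow, the $(2dH)^{O(s^7d^6)}$ term. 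No genuinely new idea is needed.
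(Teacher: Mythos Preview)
Your proposal is correct and follows precisely the approach the paper intends: the theorem is stated immediately after Proposition~\ref{prop:signconditiosexp} with the phrase ``Applying the previous proposition,'' and no separate proof is given. Your reduction to the list of feasible sign conditions, together with the bound $\#\mathcal{S}=O(s^4d^4)$ (which is the specialization of the bound $O(\delta_Y(sd+\delta_X+\delta_Y)^4)$ from Section~\ref{sec:decision} to the E-polynomial case $\delta_Y=1$, $\delta_X\le d$) and the linear scan of $\Psi$ per sign condition, reproduces exactly the two summands in the stated complexity.
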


\subsection{A decision problem for multivariate E-polynomials}

Our methods can be extended to the case of multivariate E-polynomials, namely Pfaffian functions of the form $f(x_1,\dots, x_n)=F(x_1,\dots, x_n,e^{h(x_1,\dots, x_n)})$ where $F\in \Z[X_1,\dots, X_n,Y]$ and $h \in \Z [X_1, \dots, X_n]$.

First we address the consistency problem for these functions. For $F_1, \dots, F_s \in \Z[X_1,\dots, X_n,Y]$ and $h \in \Z [X_1, \dots, X_n]$, consider the formula
$$\exists \mathbf{x}: F_1(\mathbf{x}, e^{h(\mathbf{x})}) \epsilon_1 0 \land \dots \land F_s(\mathbf{x}, e^{h(\mathbf{x})}) \epsilon_s 0$$
with $\epsilon_i \in \{<, >, =\}$ for $1 \le i \le s$, and $\mathbf{x}= (x_1,\dots,x_n)$.
This formula is equivalent to
\begin{equation}\label{eq:multivariateformula}
\exists z \exists \mathbf{x}: F_1(\mathbf{x}, e^z) \epsilon_1 0 \land \dots \land F_s(\mathbf{x}, e^z)  \epsilon_s 0 \land z= h(\mathbf{x}).\end{equation}

Consider the polynomial formula
$$ \exists \mathbf{x}: F_1(\mathbf{x}, y) \epsilon_1 0 \land \dots \land F_s(\mathbf{x}, y)  \epsilon_s 0 \land z= h(\mathbf{x}).$$
By means of quantifier elimination over $\R$, this formula is equivalent to a quantifier free formula
$\psi(z,y)$. Therefore, formula \eqref{eq:multivariateformula} is equivalent to $\exists z \psi(z,e^z)$ and, applying Theorem \ref{thm:expdecision}, we can decide whether it is true or false.

With the same arguments, we can deal with the decision problem for multivariate prenex formulas with only one block of quantifiers and obtain complexity bounds for the algorithmic solving of the problem:

\begin{proposition}
 Let $\Psi$ be a quantifier-free formula in the variables $\mathbf{x}=(x_1,\dots, x_n)$ defined by $g_i(\mathbf{x}) = G_i(\mathbf{x}, e^{h(\mathbf{x})})$ for $G_i \in \Z[X_1,\dots,X_n,Y]$, for $1\le i \le s$, and $h\in \Z[X_1,\dots,X_n]$ polynomials with degrees bounded by $d$ and heights bounded by $H$.
There is a symbolic algorithm that determines whether the formula  $\exists x_1 \dots \exists x_n \Psi(x_1,\dots, x_n)$ is true or false within complexity  $ (2dH)^{(sd)^{O(n)}}$.
\end{proposition}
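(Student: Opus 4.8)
The plan is to reduce the multivariate decision problem to the univariate case already solved in Theorem \ref{thm:expdecision}, by eliminating all the ``algebraic'' variables via quantifier elimination over $\R$ and keeping only the single exponential variable. First I would introduce a fresh variable $z$ and rewrite the formula $\exists x_1 \cdots \exists x_n \, \Psi(x_1,\dots,x_n)$, where $\Psi$ is built from atoms $G_i(\mathbf{x}, e^{h(\mathbf{x})}) \,\epsilon_i\, 0$, as the equivalent formula
\[
\exists z \, \exists x_1 \cdots \exists x_n : \widehat\Psi(x_1,\dots,x_n,z) \ \wedge\ z = h(\mathbf{x}),
\]
where $\widehat\Psi$ is obtained from $\Psi$ by replacing each occurrence of $e^{h(\mathbf{x})}$ with the new indeterminate $Y$ and then substituting $Y=e^z$ — more precisely, working purely algebraically, one treats $\widehat\Psi$ together with the constraint $z=h(\mathbf x)$ as a first‑order formula over the reals in the variables $x_1,\dots,x_n,z$ (with $y:=e^z$ encoded as an extra real variable $y$ subject to $y>0$ and linked to $z$ only \emph{after} elimination). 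Then the block $\exists x_1 \cdots \exists x_n$ acts only on polynomial (in fact, real‑closed‑field) conditions, so by Tarski's quantifier elimination over $\R$ it is equivalent to a quantifier‑free formula $\psi(z,y)$ in the two remaining variables $z,y$.

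\textbf{Key steps.} (1) Apply an effective quantifier elimination algorithm for the reals (for instance the one via cylindrical algebraic decomposition, or the more efficient critical‑point methods) to the block $\exists \mathbf x$, producing $\psi(z,y)$; since the input has $n+2$ variables, degrees $O(d)$ and $s+1$ polynomials, the output $\psi(z,y)$ is a Boolean combination of at most $(sd)^{O(n)}$ atomic formulas, each a polynomial in $\Z[z,y]$ of degree $(sd)^{O(n)}$ and height $H^{(sd)^{O(n)}}$. (2) Substitute $y = e^z$: the formula $\exists x_1\cdots\exists x_n \,\Psi$ is then equivalent to $\exists z\,\psi(z, e^z)$, which is a univariate existential sentence over E‑polynomials in the single variable $z$, built from $(sd)^{O(n)}$ functions of the form $P(z,e^z)$ with $\deg P, \deg(z) \le (sd)^{O(n)}$ and heights $\le H^{(sd)^{O(n)}}$. (3) Invoke Theorem \ref{thm:expdecision} on $\exists z\,\psi(z,e^z)$: its cost, with $s' = (sd)^{O(n)}$, $d' = (sd)^{O(n)}$, $H' = H^{(sd)^{O(n)}}$ and formula length $|\psi| = (sd)^{O(n)}$, is $(2d'H')^{O(s'^7 d'^6)} + O(s'^4 d'^4|\psi|) = (2dH)^{(sd)^{O(n)}}$, absorbing all polynomial factors into the exponent. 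This yields the claimed bound.

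\textbf{Main obstacle.} The delicate point is controlling the \emph{size and number} of polynomials produced by the quantifier elimination step, since a naive CAD has doubly‑exponential behaviour in $n$ and would ruin the stated single‑exponential‑in‑$n$ exponent. I would therefore rely on a quantifier‑elimination procedure whose output complexity for one block of $n$ quantifiers over $n+2$ variables with $s+1$ polynomials of degree $O(d)$ is $(sd)^{O(n)}$ in the number of output atoms and their degrees, and $(s+H)^{(sd)^{O(n)}}$ (hence $H^{(sd)^{O(n)}}$) in their heights — bounds available in the literature on real quantifier elimination. Once these size bounds are in hand, every subsequent quantity (number of functions, degrees, heights, formula length) feeding into Theorem \ref{thm:expdecision} is of the form $(sd)^{O(n)}$ or $H^{(sd)^{O(n)}}$, and the final complexity $(2dH)^{O(s'^7 d'^6)}$ collapses to $(2dH)^{(sd)^{O(n)}}$ as required; the only remaining verification is the routine check that the substitution $y=e^z$ introduces no new variables and that the resulting $\psi(z,e^z)$ is indeed a quantifier‑free formula in the univariate E‑polynomial class to which Theorem \ref{thm:expdecision} applies. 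A minor additional care is needed to handle the side condition $y>0$ (which is automatic since $y=e^z$) and to ensure the equivalence with the original multivariate formula is preserved through the two reductions.
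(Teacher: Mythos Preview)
Your proposal is correct and follows essentially the same route as the paper: introduce a fresh variable $z$ with the constraint $z=h(\mathbf{x})$, treat $y$ as a free real variable, eliminate the block $\exists\mathbf{x}$ by real quantifier elimination to obtain $\psi(z,y)$, and then decide $\exists z\,\psi(z,e^z)$ via Theorem~\ref{thm:expdecision}. The paper's proof simply invokes \cite[Theorem 14.22]{BPR} for the single-exponential quantifier-elimination bounds you describe, so your discussion of the ``main obstacle'' is exactly the point where the paper appeals to that reference.
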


\begin{proof}{Proof.}
The result follows straightforwardly from our previous considerations by applying the complexity bounds in \cite[Theorem 14.22]{BPR} for real elimination and Theorem \ref{thm:expdecision} above.
\end{proof}

\subsection{Thom encoding}

Similarly as in the case of real univariate polynomials, we will now show how to encode the zeros of an E-polynomial in one variable by a suitable form of Thom encodings. To do so, we will use the following notions which were already introduced in  \cite{MW12}.

\begin{definition}
Let $f(x) = F(x, e^{h(x)})$ with $F\in \Z[X,Y]$, $F\ne 0$. The \emph{pseudo-degree} of $f$ is defined as
$$\pdeg(f) = \begin{cases} (\deg_Y(F), \deg_X (F(X,0)) & \hbox{if} \ F(X,0)\ne 0 \\
(\deg_Y(F), 0) & \hbox{if} \  F(X,0)= 0 \end{cases}.$$
The \emph{pseudo-derivative} of $f$ is defined as
$$\pder(f) = \begin{cases} f'(x) & \hbox{if} \  \deg (F(X,0)) > 0 \\
\dfrac{f'(x)}{e^{k h(x)}} & \hbox{if} \   F(X,0) \in \R, f'(x)\ne 0, Y^k \mid \widetilde F(X,Y) \ \hbox{and } \ Y^{k+1} \nmid \widetilde F(X,Y)  \\ 0  & \hbox{if} \ f'(x) = 0 \end{cases}.$$
\end{definition}

Given an E-polynomial $f$, for every $i\in \mathbb{N}$, we denote $\pder^{(i)}(f)$ the $i$th successive pseudo-derivative of $f$, that is, $\pder^{(i)}(f) =\pder(\pder^{(i-1)}(f))$.

Note that $\pdeg(\pder(f)) <_{\hbox{lex}} \pdeg(f)$ if $f \notin \R$ (here, $<_{\hbox{lex}}$ is the lexicographic order).
Then, we have that  $\{\pder^{(i)} (f)\}_{i \in \N}$ is a finite family. We will encode the zeros of $f$ by means of the signs of the functions in this family, relying on the following result:

\begin{proposition} Let $f_1, \dots, f_s$ be a family of $E$-polynomials closed under pseudo-derivation (that is, for every $1\le i \le s$, $\pder(f_i) = f_j$ for some $1\le j \le s$). Let $\varepsilon: \{1,\dots, s\} \to \{-1,0,1\}$. Then
$A_{\varepsilon} = \{ x \in \R  \mid \sg(f_i(x)) = \varepsilon(i) \  \hbox{for every} \ 1\le i \le s \}$ is either empty or a point or an open interval.
\end{proposition}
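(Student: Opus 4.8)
The plan is to argue by induction on $s$, ordered by the lexicographic size of the pseudo-degrees, exploiting that pseudo-differentiation strictly decreases $\pdeg$. First I would set up the base case: if the family consists only of E-polynomials that are constants (i.e.\ lie in $\R$), then each $f_i$ has a fixed sign on all of $\R$, so $A_\varepsilon$ is either empty (if some $\varepsilon(i)$ disagrees with that sign) or all of $\R$, and both outcomes are permitted by the statement. For the inductive step, I would single out an $f_i$ whose $\pder(f_i)$ is again in the family, and more precisely reduce to studying the sign behaviour of a single $f=f_i$ together with the sign of $\pder(f)$, which by hypothesis equals some $f_j$ already controlled.

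The key mechanism is the classical Thom-lemma argument transported to this setting: suppose $f$ is a nonconstant E-polynomial and let $g=\pder(f)$. Since $\pder$ differs from the ordinary derivative only by multiplication by a nowhere-vanishing factor $e^{kh(x)}$ (or equals it), $g$ has the same sign as $f'$ at every point; in particular, on any interval where $\sg(g)$ is constantly $+1$ (resp.\ $-1$), $f$ is strictly monotone, hence the set where $f$ has a prescribed sign among $\{-1,0,1\}$ is an interval (possibly a single point when the prescribed sign is $0$, since a strictly monotone function has at most one zero, and possibly empty). The set $B=\{x : \sg(g(x)) = \varepsilon(j)\}$ is, by the inductive hypothesis applied to the subfamily obtained after removing $f$ — which is still closed under pseudo-derivation because $\pder(f)$ stays inside it — empty, a point, or an open interval. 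Intersecting the condition on $f$ with $B$: if $\varepsilon(j)\ne 0$ then on the interval $B$ the function $f$ is strictly monotone and the locus $\{x\in B: \sg(f(x))=\varepsilon(i)\}$ is again empty, a point, or an interval; if $\varepsilon(j)=0$ one has to handle the case where $g\equiv 0$ on $B$, but then $f$ is constant on $B$ (an interval or point), so the condition on $f$ either holds throughout $B$ or nowhere.

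I would organize the bookkeeping so that at each stage I peel off the E-polynomial of largest pseudo-degree in the family; its pseudo-derivative has strictly smaller pseudo-degree and therefore coincides with one of the remaining functions, so the remaining functions still form a family closed under pseudo-derivation and the induction is legitimate. The conclusion ``empty, point, or open interval'' is preserved under the operations used: intersecting with an interval the preimage of a sign condition under a function that is strictly monotone (giving an interval) or constant (giving $B$ or $\emptyset$) on that interval. The one subtlety is making sure the open-ness is tracked correctly — the locus $\sg(f)=0$ for a strictly monotone $f$ is a single point, and $\sg(f)=\pm1$ is a (possibly half-)open interval intersected with $B$, so the result is genuinely of the allowed form.

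\textbf{Main obstacle.} The delicate point I expect to need care is the degenerate branch where a pseudo-derivative vanishes identically or where $F(X,0)=0$, so that $\pder$ is defined by the middle or last clause. Here one must check that ``$\pder(f)$ has the same sign as $f'$'' and ``$\pder(f)\equiv 0 \iff f$ constant'' still hold, using that $e^{kh(x)}>0$ everywhere, and that an E-polynomial with identically zero pseudo-derivative is necessarily a constant in $\R$ (which follows since $\pdeg$ drops strictly until it reaches a constant). Getting these edge cases right — together with the verification that removing the top function leaves a pseudo-derivation-closed family — is where the real work lies; the monotonicity argument itself is routine once that is in place.
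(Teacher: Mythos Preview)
Your proposal is correct and follows essentially the same route as the paper's proof: induct on $s$, peel off the member of maximal pseudo-degree so that the remaining family stays closed under pseudo-derivation, apply the inductive hypothesis to get that the set cut out by the remaining sign conditions is empty, a point, or an open interval, and then use that $\sg(\pder(f))=\sg(f')$ to conclude the top function is either constant or strictly monotone there. The paper's version is terser (it dispatches the base case in one line by noting that $s=1$ forces $f_1=0$) and does not dwell on the open-versus-closed bookkeeping you flag, but the argument is the same; your only slip is that in the middle paragraph $B$ should be the locus of \emph{all} the sign conditions on the reduced family, not just the single condition $\sg(g)=\varepsilon(j)$, which you evidently intend anyway given your later description of the bookkeeping.
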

\begin{proof}{Proof.} By induction on $s$. If $s=1$, $f_1=0$ and there is nothing to prove.
Let $\{f_1, \dots, f_s, f_{s+1}\}$ be a family of E-polynomials closed under pseudo-derivation and assume that $f_{s+1}$ has the maximum pseudo-degree. Then,
$\{f_1, \dots, f_s\}$ is closed under pseudo-derivation and so, by inductive assumption, $ A_\varepsilon= \{ x \in \R  \mid \sg(f_i(x)) = \varepsilon(i) \  \hbox{for} \ 1\le i \le s \}$ is either empty or a point or an open interval. The only case to consider is when $A_\varepsilon$ is an interval.  As $\pder(f_{s+1}) \in \{f_1, \dots, f_s\}$, $\pder(f_{s+1})$ has constant  sign over $A_\varepsilon$.
If $\pder(f_{s+1}) =0$, then $f_{s+1}$ is a constant function. If $\pder(f_{s+1}) \ne 0$, as $\sg(\pder(f_{s+1})) = \sg (f'_{s+1})$, then $ f_{s+1}$ is a strictly monotonous function in  $A_\varepsilon$. The result follows.
\end{proof}

\begin{corollary}\label{cor:ThomEnc}
The zeros of an E-polynomial $f$ are uniquely determined by the feasible sign conditions of $(\pder^{(i)} (f))_{1 \le i \le D}$ over $\{x \in \R \mid f(x)=0\}$, where
$D = \min \{i \mid \pder^{(i+1)} (f) = 0 \}.$
\end{corollary}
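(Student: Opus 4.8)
The plan is to deduce Corollary \ref{cor:ThomEnc} from the preceding Proposition by a direct argument: two distinct zeros of $f$ cannot share the same feasible sign condition over the family of pseudo-derivatives. First I would set $D = \min\{i \mid \pder^{(i+1)}(f) = 0\}$ (this is well-defined since $\pdeg(\pder(g)) <_{\mathrm{lex}} \pdeg(g)$ whenever $g\notin\R$, so the pseudo-derivative sequence strictly decreases until it reaches a constant, and then $0$). Consider the family $\mathcal{F} = \{\pder^{(i)}(f)\}_{0\le i\le D}$, which by construction is closed under pseudo-derivation: $\pder(\pder^{(i)}(f)) = \pder^{(i+1)}(f)\in\mathcal F$ for $i<D$, and $\pder(\pder^{(D)}(f)) = 0 = \pder^{(D+1)}(f)$ which I may include as $f_1$ in the notation of the Proposition (the constant zero function is allowed as $f_1=0$ there).

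Next I would apply the Proposition to $\mathcal F$: for any sign assignment $\varepsilon$ on $\mathcal F$, the set $A_\varepsilon$ of points where each member of $\mathcal F$ realizes $\varepsilon$ is empty, a point, or an open interval. Now suppose $a$ and $b$ are two distinct real zeros of $f$ that realize the same sign condition over the pseudo-derivatives $(\pder^{(i)}(f))_{1\le i\le D}$; together with $\sg(f)=0$ this means $a$ and $b$ lie in the same $A_\varepsilon$ for the corresponding $\varepsilon$. Since $A_\varepsilon$ contains two distinct points, it must be an open interval $J$. But $f$ vanishes at $a,b\in J$, so by Rolle's theorem $f'$ vanishes somewhere in $(a,b)\subseteq J$; since $\pder^{(1)}(f) = \pder(f)$ differs from $f'$ only by multiplication by a nowhere-vanishing factor $e^{kh(x)}$ (or equals $f'$, or is identically $0$), in every case $\pder(f)$ either vanishes in $J$ or is identically zero on $J$. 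The first alternative contradicts that $\pder(f)=f_2$ has constant nonzero sign $\varepsilon(1)\ne 0$ on $J$; the second forces $f$ constant on $J$, contradicting $f(a)=0$ and $f$ having an isolated zero there — or more cleanly, if $\pder(f)\equiv 0$ then $f$ is constant on all of $\R$, hence $f\equiv 0$, the degenerate case excluded from the statement. Hence the sign condition $\varepsilon(1)$ is in fact $0$, meaning $\pder(f)$ vanishes at $a$ and $b$ too, and we descend: replacing $f$ by $\pder(f)$ and repeating, after finitely many steps (at most $D$) we reach $\pder^{(D)}(f)$, a nonzero constant, which cannot vanish at $a$ — contradiction. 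Therefore $a=b$.

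The main obstacle I anticipate is the careful bookkeeping around the definition of $\pder$: it is piecewise-defined, and I must verify in each branch that $\sg(\pder(f)(x)) = \sg(f'(x))$ for all $x$ (this is exactly the content used in the proof of the Proposition, since $e^{kh(x)}>0$), so that "$\pder^{(i)}(f)$ has constant nonzero sign on $J$" genuinely rules out $f^{(i)}$ having an interior zero via Rolle. A secondary subtlety is handling the base case of the descent and the excluded degenerate possibility $f\equiv 0$; but these are exactly the hypotheses built into the pseudo-degree formalism ($F\ne 0$), so no real difficulty arises. The uniqueness claim then says precisely that the map sending a zero of $f$ to its feasible sign condition over $(\pder^{(i)}(f))_{1\le i\le D}$ is injective, which is what "uniquely determined" means.
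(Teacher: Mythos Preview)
Your proposal is correct and follows the paper's intended route: the corollary is stated without proof precisely because it is an immediate consequence of the Proposition applied to the family $\{\pder^{(i)}(f)\}_{0\le i\le D}\cup\{0\}$. Your Rolle-and-descent argument works, but note that it is redundant: once $A_\varepsilon$ is an open interval $J$ with $\varepsilon(0)=0$, you already have $f\equiv 0$ on all of $J$ (not merely at $a,b$), whence $f'\equiv 0$ on $J$, so $\pder(f)\equiv 0$ on $J$, and so on down to the nonzero constant $\pder^{(D)}(f)$ without invoking Rolle at any stage.
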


This result allows us to define a notion of \emph{Thom encoding} for zeros of  E-polynomials:

\begin{definition} Let $f(x) = F(x, e^{h(x)})$ be an E-polynomial and $D = \min \{i \mid \pder^{(i+1)} (f) = 0 \}$. Let $\varepsilon: \{0,\dots, D\} \to \{-1,0,1\}$ with $\varepsilon(0)=0$. We say $(f,\varepsilon)$  is a \emph{Thom encoding} of the real number $\xi$ if
$ \{ x \in \R  \mid \sg(\pder^{(i)} (f)(x)) = \varepsilon(i) \  \hbox{for} \ 0\le i \le D \} =\{\xi\} $.
  \end{definition}

A result analogous to \cite[Proposition 2.37]{BPR} holds in our context and allows us to use Thom encodings to order all the real zeroes of a given E-polynomial:

\begin{remark}\label{rem:orderThom}
Let $f$ be an  E-polynomial and $\xi_1, \xi_2$ t
wo different real zeros of $f$. By Corollary
\ref{cor:ThomEnc}, $\xi_1$ and $\xi_2$ have two different Thom encodings
$(f,\varepsilon_1)$ and $(f,\varepsilon_2)$, with $\varepsilon _j : \{0,\dots,D\}\to \{-1,0,1\}$ for $j=1,2$. Let $k=\max\{0\le i \le D \mid \varepsilon_1(i) \ne \varepsilon_2(i) \}$
 (then $ \varepsilon_1(k+1)=\varepsilon_2(k+1) \ne 0$). If $\varepsilon_1(k+1)=\varepsilon_1(k+1) =1$, then $\xi_1 > \xi_2$ if and only if $\varepsilon_1(k)>\varepsilon_2(k)$ and, if $\varepsilon_1(k+1)=\varepsilon_1(k+1) =-1$, then $\xi_1 > \xi_2$ if and only if $\varepsilon_1(k)<\varepsilon_2(k)$.
\end{remark}

The following easy example  illustrates Thom encodings in the E-polynomial setting.
\begin{example}
Let $f(x) = (6x-1)e^{2x}-(8x+1)e^x-1 $. This E-polynomial has three real zeros, $\xi_1,\xi_2, \xi_3$. The sequence of pseudo-derivatives of $f$ is $(\pder^{(i)}(f))_{1\le i \le 4}$, where
\begin{eqnarray*}
\pder^{(1)}(f)(x) &=& (12x+4) e^{x}-(8x+9) e^x,\\
\pder^{(2)}(f)(x) &=& (12x+16) e^{x}-8, \\
\pder^{(3)}(f)(x) &=& 12 x +28, \\
\pder^{(4)}(f)(x) &=& 12.
\end{eqnarray*}
The Thom encodings of $\xi_1,\xi_2, \xi_3$ as zeros of $f$ are given by
$$\varepsilon_1 = (0,-1,-1,-1,1), \ \varepsilon_2 = (0,-1,-1,1,1), \ \varepsilon_3 = (0,1,1,1,1).$$
Comparing them as stated in Remark \ref{rem:orderThom}, it follows that $\xi_1 <\xi_2$ (since $\varepsilon_1(4)=\varepsilon_2(4)=1$ and $\varepsilon_1(3)<\varepsilon_2(3)$) and $\xi_2< \xi_3$ (since $\varepsilon_2(i)=\varepsilon_3(i)=1$ for $i=4,3$ and  $\varepsilon_2(2)<\varepsilon_3(2)$).
\end{example}

Unfortunately, we do not know whether Thom encodings can be used to deal with arithmetic operations between zeros of $E$-polynomials defined from the same polynomial $h\in \Z[X]$, since the result of such operation may not be a zero of a function in the same class.
For example, a question posed in \cite{Richardson91} asks whether the set $L= \{x \in \R \mid F(x,e^x) =0, F \in \Q[X,Y]\}$ is closed under addition. Here we prove that the answer is negative:

Assuming $L$ is closed under addition, as $\ln(2) \in L$ (since it is a zero of $e^x -2$), it follows that $\ln(2) +1 \in L$. Then, there exists a non-zero polynomial $F\in \Q[X,Y]$ such that $F(\ln(2) +1, 2e) = 0$ and, therefore, $e$ is algebraic over $\Q(\ln (2))$. Similarly, $\ln(2) + \sqrt{2} \in L$ and then,
$e^{\sqrt{2}}$ is algebraic  over $\Q(\ln (2)+\sqrt{2})$. As a consequence,
the transcendence degree of $\Q(\sqrt{2}, \ln(2), e, e^{\sqrt{2}})$ is $1$, contradicting the
fact that $\{e,e^{\sqrt{2}}\}$ is algebraically independent over $\Q$ by the  Lindemann-Weierstrass theorem.

\bigskip

Our previous results from Section \ref{sec:epolydecision} allow us to compute Thom encodings for zeros of E-polynomials algoritmically. First, we estimate the length of these encodings.

\begin{lemma} Let $f(x) = F(x, e^{h(x)})$, with $F\ne 0$, and $D = \min \{i \mid \pder^{(i+1)} (f) = 0 \}$. If $\deg(F) = d$ and  $\deg(h)=\delta\ge 2$, then $D \le \delta^{d} (d+1)$   and  the total degree of the polynomial defining  $\pder^{(i)} (f)$ is bounded by  $
\delta^{d} (d+1)$ for $ 1 \le i \le D$. If $\deg (h) = 1$, then $D \le d (d+1)$ and the total degree of the polynomial defining $\pder^{(i)} (f)$ is bounded by $d$ for $ 1 \le i \le D$.
\end{lemma}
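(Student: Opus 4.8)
The plan is to analyze how the pseudo-degree $\pdeg$ (an element of $\N^2$ ordered lexicographically) decreases under pseudo-derivation, and then to count how many steps it can take to reach the zero function. Recall that if $f(x) = F(x,e^{h(x)})$ with $\deg_Y(F) = m$ and $F(X,0)$ of degree $n$ (or $n=0$ if $F(X,0)=0$), then $\pdeg(f) = (m,n)$. I would first establish the key inequality $\pdeg(\pder(f)) <_{\mathrm{lex}} \pdeg(f)$ more quantitatively: I want to track separately how the first coordinate (the $Y$-degree) and the second coordinate behave in each of the three cases of the definition of $\pder$.

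First I would treat the case $\deg(h) = \delta \ge 2$. Write $F = \sum_{j=0}^m a_j(X) Y^j$ with $\deg_Y(F) = m \le d$ and each $a_j$ of degree at most $d$. From $\widetilde F(X,Y) = \partial_X F + h'(X) Y \,\partial_Y F$, one sees that $\deg_Y(\widetilde F) \le m$ always, and the $Y^m$-coefficient of $\widetilde F$ is $a_m'(X) + m h'(X) a_m(X)$; if $a_m$ is a nonzero constant this vanishes, dropping the $Y$-degree, otherwise it does not. The point is that in the case $F(X,0)$ has positive degree, $\pder(f) = f'$ and the $Y$-degree stays $m$ but the second coordinate (degree of the constant-in-$Y$ term $a_0'(X)$) strictly drops; this can happen at most $d+1$ times before $a_0$ becomes a constant. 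Once $a_0$ is constant (the second/third branches), $\pder$ divides out the appropriate power $Y^k$ of $\widetilde F$, which strictly lowers the $Y$-degree. So along the sequence $(\pder^{(i)} f)_i$, the first coordinate $m$ is nonincreasing and, whenever it stalls, the second coordinate strictly drops through a range of size at most $d+1$; hence the number of steps with a given value of $m$ is at most $d+1$, and there are at most $d+1$ values of $m$. That gives $D \le (d+1)^2$, which is far better than the claimed bound — so the real content must be the \emph{degree growth}, which I address next, and the stated $\delta^d(d+1)$ bound on $D$ presumably comes from controlling $D$ via the total degree bound rather than independently. I would reconcile this: the recursion $\deg(\pder^{(i)} f) \le \delta^d(d+1)$ is what forces $D$ to be small once we know the $X$-degrees cannot blow past it.

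The heart of the argument is therefore the total-degree bound on $\pder^{(i)}(f)$. Each application of $f \mapsto f'$ replaces the $X$-degree $d_X$ of the defining polynomial by at most $d_X + (\delta - 1)$ (since $\widetilde F = \partial_X F + h' Y\, \partial_Y F$ and $\deg(h') = \delta - 1$). Since this genuine-derivative branch is applied at most $d+1$ times \emph{per fixed value of the $Y$-degree}, and each drop in $Y$-degree is a ``free'' operation (dividing by $Y^k$ does not raise $X$-degree), I would set up the recursion on $\deg_X$: after the block of $\le d+1$ derivatives at $Y$-degree level $m$, the $X$-degree has grown by at most $(d+1)(\delta-1)$, and the $Y$-degree then drops by at least $1$. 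Running this through the at most $d+1$ levels and bounding $(d+1)(\delta-1) + $ (previous) multiplicatively, one gets an $X$-degree bound that is roughly $(d+1)\delta^{d}$-ish; the clean statement $\delta^d(d+1)$ for the total degree should fall out by keeping track that the $Y$-degree contributes at most $d$ to the total degree and is absorbed. For $\delta = 1$ (so $h' $ constant), derivatives do not raise the $X$-degree at all, so the total degree stays $\le d$ throughout, and the counting argument above directly gives $D \le d(d+1)$.

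The main obstacle I anticipate is bookkeeping in the $\delta \ge 2$ case: precisely interleaving the ``$X$-degree grows by $\delta - 1$'' steps with the ``$Y$-degree drops'' steps so that the compounded bound comes out as exactly $\delta^d(d+1)$ rather than something messier like $(d+1)\delta^{d+1}$ or $(d+1)^2\delta^d$. I would handle this by proving, by induction on the $Y$-degree $m$ of $F$, the sharper statement: \emph{if $\deg_Y(F) = m$ and $\deg(F) = d$, then $D(f) \le \delta^{m}(d+1)$ and every $\pder^{(i)}(f)$ has total degree $\le \delta^{m}(d+1)$} — the induction step removing one level of $Y$-degree at the cost of one factor of $\delta$ and one block of $\le d+1$ ordinary derivatives, being careful that passing to $\pder^{(d+1)}(f)$ lands in a function whose defining polynomial has $Y$-degree $< m$ but total degree still controlled, so the inductive hypothesis applies. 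I would also double-check the edge cases where $F(X,0) = 0$ from the start (second coordinate already $0$) and where some $\pder^{(i)}(f)$ becomes a nonzero constant before the generic count predicts, confirming these only make $D$ smaller.
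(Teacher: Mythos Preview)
Your overall plan---track the pseudo-degree through blocks where the $Y$-degree is fixed, then count---matches the paper's approach, but there is a genuine gap in your bookkeeping that makes the argument fail as written.

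The error is the claim that ``the second coordinate strictly drops through a range of size at most $d+1$'' at \emph{every} $Y$-degree level, leading you to $D\le (d+1)^2$. This is only true for the \emph{first} block. When the $Y$-degree drops (by dividing $\widetilde F$ by $Y^k$), the new constant-in-$Y$ term is the $Y^k$-coefficient of $\widetilde F$, whose $X$-degree can be as large as the full $X$-degree of $\widetilde F$---and that has grown during the previous block of derivatives. Concretely, if the defining polynomial at the start of block $i$ has $X$-degree $d_i$, then the block has up to $d_i+1$ steps (not $d+1$), and the next $X$-degree satisfies $d_{i+1}+1\le \delta(d_i+1)$. Iterating gives $d_j+1\le \delta^j(d_0+1)\le \delta^j(d+1)$, and since there are at most $d$ drops in $Y$-degree, $D\le \sum_{j=0}^{d-1}(d_j+1)\le (d+1)\frac{\delta^d-1}{\delta-1}\le \delta^d(d+1)$. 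This is exactly how the paper proceeds, and it shows the bound $\delta^d(d+1)$ is forced by the geometric growth of block length, not merely by a degree bound applied after the fact.

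Your proposed induction on $m=\deg_Y F$ with parameter $d=\deg F$ also does not close: after the first block of $\le d+1$ steps the new polynomial has $Y$-degree $\le m-1$ but total degree up to $(d+1)\delta-1+(m-1)$, and plugging this into the inductive hypothesis gives $D\le (d+1)+\delta^{m-1}((d+1)\delta+m-1)$, which overshoots $\delta^m(d+1)$ by $\delta^{m-1}(m-1)+(d+1)$. If you want an inductive version, parametrize by $\deg_X F$ rather than total degree; then the recursion $d_{i+1}+1\le\delta(d_i+1)$ drives the induction cleanly. For $\delta=1$ your argument is fine since the $X$-degree never grows, so every block genuinely has length $\le d+1$.
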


\begin{proof}{Proof.} To estimate $D$ we are going to analyze the pseudo-degrees of the successive pseudo-derivatives of $f$ and the degrees in $X$ of the polynomials defining them.

Observe that if an E-polynomial $f(x) = F(x, e^{h(x)})$ has pseudo-degree $\pdeg(f) = (m_0, n_0)$ and  $\deg_X (F) = d_0$, then its pseudo-derivative $\pder(f)$ is defined by a polynomial with degree in $X$ bounded by $d_0+\delta-1$ and  satisfies
$$\pdeg(\pder(f)) = \begin{cases} (m_0, n_0-1) & {\rm if}\  n_0 \ne 0 \\
(m_0', n_0')  & {\rm if}\  n_0= 0
\end{cases}$$
where $m_0' \le m_0-1$ and $n_0' \le d_0+\delta-1$.
If $m_0 = 0$, after $n_0$ pseudo-derivation steps, we get $\pdeg(\pder^{(n_0)}(f)) = (0,0)$ and no further pseudo-derivation is needed.
If $m_0>0$, after $n_0+1$ pseudo-derivations, the first coordinate of the pseudo-degree is smaller than $m_0$. Let $(m_1, n_1) = \pdeg(\pder^{(n_0+1)}(f))$ and $d_1$ the degree in $X$ of the polynomial defining $\pder^{(n_0+1)}(f)$. Then, $m_1 \le m_0-1$, $d_1 \le d_0\delta+\delta-1$ and, therefore, $n_1 \le d_0\delta+\delta-1$.

As a consequence, given $f(x)= F(x, e^{h(x)})$, we obtain the sequence of pseudo-degrees $(m_i, n_i)_{0\le i \le k}$ defined as $(m_0, n_0)= \pdeg(f)$ and, for $i\ge 1$, if $m_{i-1}\ne0$, $N_{i-1} = \sum_{j=0}^{i-1} (n_j+1)$ and  $$(m_i, n_i)= \pdeg(\pder^{(N_{i-1})}(f)).$$ If $m_{i-1} = 0$, then $k=i$,   $N_{i-1} = \sum_{j=0}^{i-1} (n_j+1) -1 $ and $(m_k,n_k) = \pdeg(\pder^{(N_{k-1})}(f)) = (0,0)$. Let $d_i$ be the degree in $X$ of the polynomial defining  $\pder^{(N_{i-1})}(f)$ for $0 \le i \le k$.
By our previous considerations,
$m_{i}\le m_{i-1}-1$, $d_i \le d_{i-1}\delta+\delta -1$ and, therefore, $n_i \le d_{i-1}\delta+\delta -1$  for every $i\ge 1$. In particular, $k\le m_0\le d$ and $D = N_{k-1}$. We have that
$$ N_{k-1} \le  \sum_{j=0}^{k-1} (n_j+1) \le \sum_{j=0}^{k-1} (d_j+1) \le \sum_{j=0}^{k-1} \delta^j (d+1) = \begin{cases} \dfrac{\delta^k -1}{\delta -1} (d + 1)  & {\rm if}\  \delta \ge 2  \\ k(d+1) & {\rm if}\  \delta=1 \\ \end{cases}$$
and the result follows.
\end{proof}

Applying  Proposition \ref{prop:signconditiosexp} and the previous result we deduce the following:

\begin{proposition} Given an $E$-polynomial $f(x) = F(x, e^{h(x)})$, where $F \in \Z[X,Y]$ and $h \in \Z[X]$ are polynomials with degree bounded by $d$ and height bounded by $H$, the complexity of finding the Thom encodings of all the zeros $f$ is of order $(2dH)^{d^{O(d)}}$. In case $\deg (h) = 1$ the complexity is of order $(2dH)^{O(d^{22} )}$.
\end{proposition}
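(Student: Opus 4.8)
The plan is to combine the two ingredients that are already available: the structural bound on the pseudo-derivation tower proved in the preceding Lemma, and the complexity estimate for feasible sign conditions of a family of univariate E-polynomials in Proposition \ref{prop:signconditiosexp}. First I would recall, from Corollary \ref{cor:ThomEnc}, that the Thom encodings of the zeros of $f$ are exactly the feasible sign conditions of the family $(\pder^{(i)}(f))_{1\le i\le D}$ over the set $\{x\in\R\mid f(x)=0\}$, where $D=\min\{i\mid \pder^{(i+1)}(f)=0\}$. So it suffices to (i) compute the polynomials $\widetilde F^{(i)}$ defining the successive pseudo-derivatives $\pder^{(i)}(f)$ for $0\le i\le D$, and (ii) run the sign-condition algorithm of Proposition \ref{prop:signconditiosexp} on the family $\{f,\pder^{(1)}(f),\dots,\pder^{(D)}(f)\}$.

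Next I would control the parameters. By the preceding Lemma, when $\deg(h)=\delta\ge 2$ we have $D\le \delta^d(d+1)$ and every $\pder^{(i)}(f)$ is defined by a polynomial of total degree at most $\delta^d(d+1)$; thus the number of functions in the family is $s=D+1\le \delta^d(d+1)+1=d^{O(d)}$ and the degree bound for the whole family is also $d^{O(d)}$. One still has to check that the heights of the polynomials $\widetilde F^{(i)}$ remain singly exponential in these (already large) quantities: each pseudo-derivation step multiplies the height by a factor polynomial in the current degree and in $H(h)$ (this is the same height recursion used in \cite[Remark 19]{BJS16} and in the proof of Proposition \ref{prop:signdetermination}, together with a division by $e^{kh(x)}$ which only removes a monomial factor and does not increase the height), so after $D$ steps the height is bounded by $(2dH)^{d^{O(d)}}$. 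Feeding $s=d^{O(d)}$, degree $d^{O(d)}$ and height $(2dH)^{d^{O(d)}}$ into the bound $(2dH')^{O(s'^7 d'^6)}$ of Proposition \ref{prop:signconditiosexp} gives $(2dH)^{d^{O(d)}}$, since an exponential of a polynomial in $d^{O(d)}$ is still of the form $d^{O(d)}$ in the exponent. The cost of computing the pseudo-derivatives themselves (iterated differentiation of a bivariate polynomial and extraction of the $Y$-valuation) is polynomial in the output size and is absorbed. This yields the first assertion.

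For the case $\deg(h)=1$ the Lemma gives $D\le d(d+1)$ and total degree at most $d$ for every $\pder^{(i)}(f)$, so now $s=O(d^2)$ and the degree bound is $d$. The height recursion over $D=O(d^2)$ steps, each step multiplying by a factor polynomial in $d$ and $H$, keeps the height at $(2dH)^{O(d^3)}$ (say), and plugging $s=O(d^2)$, degree $O(d)$, height $(2dH)^{O(d^3)}$ into Proposition \ref{prop:signconditiosexp} gives an exponent of order $O((sd)^6)+O(d^3)=O(d^{18})$ times a logarithmic contribution from the height, landing within $(2dH)^{O(d^{22})}$ as claimed; I would carry out the bookkeeping carefully to confirm the precise constant $22$.

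The main obstacle I anticipate is the height bookkeeping for the pseudo-derivatives rather than anything conceptual: one must be careful that the division by $e^{kh(x)}$ in the definition of $\pder$ is genuinely height-non-increasing (it is, as it corresponds to factoring out $Y^k$ from $\widetilde F(X,Y)$), and that the composition of $D$ elementary height bounds does not secretly introduce a factor like $(d!)^D$ that would escape the stated form; tracking the recursion $H_i\le H_{i-1}\cdot\mathrm{poly}(d_i,H(h))$ with $d_i$ the (already bounded) degrees shows it does not. Everything else is a direct substitution of parameters into results proved earlier in the paper.
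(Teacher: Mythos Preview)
Your proposal is correct and follows essentially the same approach as the paper: the paper's argument is the one-line remark that the proposition follows by combining the preceding Lemma (bounding $D$ and the degrees of the $\pder^{(i)}(f)$) with Proposition \ref{prop:signconditiosexp}, and that is exactly what you do, with the added (and useful) care of tracking heights through the pseudo-derivation tower. One small slip to fix in your bookkeeping: the exponent in Proposition \ref{prop:signconditiosexp} is $O(s^7 d^6)$, not $O((sd)^6)$; with $s=O(d^2)$ and degree $d$ this gives $O(d^{20})$, and combining with a height of order $(2dH)^{O(d^2)}$ yields the stated $O(d^{22})$.
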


\end{document}